\documentclass{elsarticle}               

\usepackage{graphicx}          
\usepackage{amssymb}
\usepackage{amsmath}
\usepackage{subcaption}
\usepackage{xcolor}
\usepackage{breqn}
\usepackage{booktabs} 

\usepackage[hidelinks]{hyperref}
\usepackage{amsthm}
\newtheorem{assumption}{Assumption}
\newtheorem{proposition}{Proposition}
\newtheorem{remark}{Remark}

\newcommand{\vf}{\mathbf{f}}
\newcommand{\vx}{\mathbf{x}}
\newcommand{\vu}{\mathbf{u}}
\newcommand{\vh}{\mathbf{h}}

\newcommand{\vZ}{\mathbf{Z}}
\newcommand{\vz}{\mathbf{z}}
\newcommand{\vS}{\mathbf{S}}
\newcommand{\bv}{\mathbf{v}}
\newcommand{\vl}{\mathbf{l}}
\newcommand{\vQ}{\mathbf{Q}}
\newcommand{\vR}{\mathbf{R}}
\newcommand{\vk}{\mathbf{k}}
\newcommand{\vK}{\mathbf{K}}
\newcommand{\blambda}{\boldsymbol{\lambda}}
\newcommand{\bLambda}{\boldsymbol{\Lambda}}

\newcommand{\bo}{\boldsymbol{0}}
\newcommand{\cJ}{\mathcal{J}}
\newcommand{\tran}{^{\top}}
\makeatletter
\long\def\@maketablecaption#1#2{\@tablecaptionsize
  \setbox\@tempboxa\hbox{#1. #2}
  \ifdim \wd\@tempboxa >\hsize              
    \unhbox\@tempboxa\par                   
  \else                                     
    \global \@minipagefalse
    \hbox to\hsize{\hfil\box\@tempboxa\hfil}
  \fi}
\makeatother

\begin{document}

\begin{frontmatter}

\title{A Sequential Quadratic Programming Perspective on Optimal Control}

\tnotetext[footnoteinfo]{This paper was not presented at any IFAC 
meeting. Corresponding author: S.~Chakravorty.}

\author[TAMU]{Abhijeet}\ead{abhinir@tamu.edu}    
\author[TAMU]{Suman Chakravorty}\ead{schakrav@tamu.edu}              

\address[TAMU]{Texas A\& M University, College Station, TX, US.}  

\begin{keyword}                           
Optimal Control; Sequential Quadratic Programming (SQP); iLQR; DDP; Stagewise-Newton.               
\end{keyword}                             

\begin{abstract}                          
This paper investigates the performance of Newton's method, iterative Linear Quadratic Regulator (iLQR), and Differential Dynamic Programming (DDP) in solving discrete-time optimal control problems. We offer a unified perspective on these approaches, centered on the understanding that each method ultimately solves a sequence of quadratic programs. Building upon previous comparative works, this paper contributes additional mathematical explanations and results to the analysis.

In particular, it is shown that iLQR is a principled Sequential Quadratic Programming (SQP) approach, rather than merely an approximation of DDP that neglects Hessian terms. This characteristic guarantees that iLQR will always produce a cost-descent direction and converge to an optimum, under some mild assumptions. In contrast, Newton's method and DDP lack these guarantees, especially when initialized far from an optimum. A series of numerical examples are presented to corroborate the mathematical reasoning and analysis developed in the paper.
\end{abstract}

\end{frontmatter}

\section{Introduction}

The study of iterative trajectory optimization has a long history in nonlinear
control. Among the most influential methods are Differential Dynamic Programming
(DDP), introduced in the 1960s \cite{mayne1965ddp, JacobsonMayne1970DDP}, and the
iterative Linear Quadratic Regulator (iLQR), which later emerged as a simplified
yet powerful alternative \cite{todorov2005ilqg, tassa2012synthesis}. DDP is acclaimed as
Newton-type algorithm for trajectory optimization that exploits second-order
expansions of both the cost and the dynamics, achieving locally quadratic
convergence near an optimum \cite{mayne1965ddp, JacobsonMayne1970DDP,
jacobson1970ddp}. iLQR, by contrast, employs consistent first-order
approximations of the dynamics, making it more numerically reliable and
computationally efficient in practice \cite{todorov2005ilqg, tassa2012synthesis,D2C}.
Over the decades, these two approaches have become foundational tools for
nonlinear control, and trajectory optimization
\cite{BrysonHo69,Wang2025SearchFeedbackRL,inf_horizon1,inf_horizon2}.

A large body of literature has analyzed the theoretical properties and limitations
of both methods. Several works have emphasized the quadratic convergence of DDP
under ideal assumptions \cite{mayne1965ddp, JacobsonMayne1970DDP, Pantoja_SN, shoemaker1983ddp},
but have also noted the difficulties that arise in practice when the algorithm's
backward pass combines second-order expansions with first-order terms, which can
compromise stability and necessitate heavy regularization \cite{Pantoja_SN,LiaoShoemaker1992,feng2020optimization,NgangaWensing2021RAL,pardo2017time}.
An explicit connection between DDP and Newton's method was established in
\cite{Pantoja_SN,Murray1984DDP_Newton}, and it is argued in \cite{LiaoShoemaker1992} that, for
discrete-time optimal control, DDP can outperform Newton's method---retaining
strong local convergence while offering greater robustness and efficiency under
practical problem structures. For rigid-body systems, accelerated second-order DDP
has been explored \cite{NgangaWensing2021RAL}. Extensions such as the iterative
Linear Quadratic Gaussian (iLQG) \cite{todorov2005ilqg} and control-limited iLQR
\cite{tassa2014control} have reinforced the practical importance of iLQR in
stochastic and constrained settings.

The success of trajectory optimization methods has been demonstrated broadly
in robotics. Model Predictive Control (MPC) frameworks based on iLQR and DDP
have enabled real-time control of complex robotic systems
\cite{Neunert2018WholeBodyMPC, Farshidian2017EfficientMPC}. The development of
solvers such as ALTRO \cite{ALTRO} and ProxDDP \cite{ProxDDP} has
pushed the practical limits of constrained trajectory optimization for high
degree-of-freedom systems. 


In contrast to these developments, the adoption of DDP has been more
domain-specific. In aerospace engineering, DDP has been applied to trajectory
optimization in astrodynamics and spacecraft guidance, where its second-order
structure can be leveraged for high-precision planning \cite{Russell1, Russell2,
Aziz2019HDDP_CR3BP}. Beyond astrodynamics, DDP has been used for complex
robotic systems \cite{Oshin2022PDDP_RSS} and planetary landing problems
\cite{Noyes2021RobustEntryGuidance}, demonstrating its utility in solving
nonlinear, constrained, and multi-phase optimization problems. While effective
in these contexts, these studies also highlight the algorithm's sensitivity to
problem structure and its dependence on strong regularization, particularly
when the local quadratic models become inconsistent \cite{Pantoja_SN,
LiaoShoemaker1992}.

Although iLQR is often described as an approximation of DDP obtained by
neglecting second-order dynamic terms \cite{todorov2005ilqg}, this characterization
obscures its principled nature. In this paper, we revisit the solution of optimal
control problems through the lens of  Sequential Quadratic Programming
(SQP) \cite{boggs1995sqp, Nocedal2006NumericalOptimization}.The critical insight is that 
the Newton step in a constrained optimization problem is equivalent to solving a
Quadratic Program (QP), which reduces to an LQR problem in the context of optimal
control. Using this framework, we show that iLQR is not merely an approximation
of DDP, but rather a principled method that guarantees cost-descent directions
under mild assumptions---a property that neither DDP nor Newton's method can ensure
in general, particularly when far from an optimum \cite{Pantoja_SN,
LiaoShoemaker1992}. We further derive the exact Newton LQR formulation, and
show that DDP can be interpreted as an approximation of this exact Newton step
only near an optimum. Our empirical evaluations on pendulum and cart-pole swing-up
tasks, as well as several benchmark nonlinear problems, corroborate the SQP-based
analysis and demonstrate that iLQR delivers more reliable and efficient solutions
than DDP, often requiring far less computation due to the absence of dynamics
Hessian terms \cite{Wang2025SearchFeedbackRL,D2C}.

The remainder of this paper is organized as follows.
Section~\ref{sec:background} introduces the problem formulation and background on
the algorithms discussed in the paper. Section~\ref{sec:algorithms} presents a detailed comparison of these algorithms.
Section~\ref{sec:neighbouring_optima} presents neighbouring optimal solution as DDP equations at the optimal.
Section~\ref{sec:conclusion} concludes with insights drawn from these comparisons.

\section{Background}\label{sec:background}
We address the general class of discrete-time, finite-horizon optimal control problems. Let the state of a system at time step $t$ be $\vx_t \in \mathbb{R}^n$, and the control input be $\vu_t \in \mathbb{R}^m$. The system evolves over a finite time horizon of $T$ steps according to the nonlinear state-transition function $f$:
\begin{equation}
    \vx_{t+1} = \vf(\vx_t, \vu_t), \qquad t=0, 1, \cdots, T-1,
    \label{eq:dynamics}
\end{equation}
starting from a given initial state $\vx_0$.

The objective is to find a sequence of control inputs $\mathbf{U} = \{\vu_0, \dots, \vu_{T-1}\}$ that minimizes a total cost functional, $\cJ$. The optimal control problem can be formally stated as:
\begin{equation}
    \min_{\mathbf{U}} \cJ(U) = \sum_{t=0}^{T-1} c(\vx_t, \vu_t) + C_T(\vx_T),
    \label{eq:ocp}
\end{equation}
subject to the dynamics in \eqref{eq:dynamics}. To ensure the problem is well-posed for the optimization methods used in this work, we make the following assumption.

\begin{assumption}
The system dynamics function $\vf$, stage cost $c$, and terminal cost $C_T$ are twice continuously differentiable ($\mathcal{C}^2$) functions with respect to their arguments.
\label{assum:smoothness}
\end{assumption}
The prominent trajectory optimization methods that we analyze in this paper, including Differential Dynamic Programming (DDP), the iterative Linear Quadratic Regulator (iLQR), and Stagewise Newton methods, all rely on a common underlying principle. They iteratively refine a nominal trajectory by solving a local, second-order approximation of the original optimal control problem. Constructing this approximation requires the derivatives of the dynamics and cost functions evaluated along the nominal trajectory.

Let $(\bar{\vx}_t, \bar{\vu}_t)$ for $t=0, \dots, T-1$ denote such a nominal trajectory, which is a dynamically feasible sequence satisfying \eqref{eq:dynamics}. We define the state and control perturbations, or variations, from this nominal trajectory as:
\begin{equation}
    \delta \vx_t := \vx_t - \bar{\vx}_t \quad \text{and} \quad \delta \vu_t := \vu_t - \bar{\vu}_t.
\end{equation}

For notational brevity, we adopt a compact subscript notation for the partial derivatives. The first-order derivatives (Jacobians) and second-order derivatives (Hessians) of the dynamics and cost functions are written as:
\[
\vf_\vx, \quad \vf_\vu, \quad \vf_{\vx\vx}, \quad \vf_{\vu\vu}, \quad \vf_{\vu\vx}, \quad
c_\vx, \quad c_\vu, \quad c_{\vx\vx}, \quad c_{\vu\vu}, \quad c_{\vx\vu}.
\]
Derivatives of the terminal cost are denoted similarly, e.g., $C_{T,\vx}$ and $C_{T,\vx\vx}$. Unless stated otherwise, all such derivatives are evaluated at the corresponding time step along the nominal trajectory, i.e., $\vf_\vx \equiv \frac{\partial \vf}{\partial \vx}(\bar{\vx}_t, \bar{\vu}_t)$. Finally, we denote the transpose of a vector or matrix $\mathbf{A}$ as $\mathbf{A}\tran$.

A unified perspective for understanding and comparing the trajectory optimization algorithms in this paper is offered by the framework of Sequential Quadratic Programming (SQP). We therefore begin by developing the fundamental concepts of the SQP approach as applied to the general optimal control problem described in eq.~\eqref{eq:ocp}. 

\subsection{Sequential Quadratic Programming}
In this section, we outline the basic SQP program for future references. Consider the generic optimization problem:
\begin{align*}
    &\min_{\vz} g(\mathbf{z}),\\
    \text{subject to:}~~&\mathbf{h}(\mathbf{z}) = \boldsymbol{0}.
\end{align*}
This constrained optimization can be converted to an unconstrained optimization using an augmented Lagrangian as:
\begin{align*}
    g(\mathbf{z}) + \bLambda\tran \mathbf{h}(\mathbf{z}) = 0.
\end{align*}

This gives the necessary conditions as:
\[
\frac{\partial \mathcal{L}}{\partial \mathbf{z}} = \boldsymbol{0} 
\quad \text{and} \quad 
\frac{\partial \mathcal{L}}{\partial \boldsymbol{\bLambda}} = \boldsymbol{0},
\]
yielding:
\[
g_{\mathbf{z}} +  \vh_{\mathbf{z}}\tran \bLambda^*= \boldsymbol{0},\;
\vh(\mathbf{z}^*) = \boldsymbol{0},
\]
at a minimum (stationary point) \((\vz^*, \bLambda^*)\).

To solve the problem, the second-order Newton method suggests to start with a nominal trajectory: \((\bar{\vz}, \bar{\bLambda})\) and then expand the Lagrangian 
\(\mathcal{L}\) to second order around the nominal.
\begin{align*}
\mathcal{L}(\vz, \bLambda) = g + \bLambda \tran \mathbf{h} 
= \big( \bar{g} + \bar{\bLambda}\tran\bar{\mathbf{h}} \big) 
+ (g_\vz \delta \vz + \bar{\bLambda}\tran \mathbf{h}_\vz \delta \vz + \delta \bLambda\tran\bar{\mathbf{h}}) \\
+ \frac{1}{2} \delta \vz\tran \big( g_{\vz  \vz} + \bar{\bLambda} \otimes \mathbf{h}_{\vz \vz} \big) \delta \vz 
+  \delta \vz\tran \, \mathbf{h}_\vz \delta \bLambda 
 \\
\equiv \bar{\mathcal{L}} + \Delta \mathcal{L},
\end{align*}
where $\bar{g} = g(\bar{\vz})$, $\bar{\mathbf{h}} = \mathbf{h}(\bar{\vz})$, $\bar{\mathcal{L}} = \bar{g}+\bar{\bLambda}\tran\bar{\mathbf{h}}$ and $\otimes$ denote a tensor product.


In a standard SQP approach, one follows iterative Newton's method and optimizes the \((\Delta \mathcal{L})\) part in terms of the perturbations \((\delta \vz, \delta \bLambda)\) and updates the solution iteratively as:
\[
\vz = \bar{\vz} + \delta \vz, 
\quad 
\bLambda = \bar{\bLambda} + \delta \bLambda.
\]

The necessary conditions for obtaining the optimum of \(\Delta \mathcal{L}\) in terms of \(\delta x, \delta \lambda\) are obtained by setting
\[
\frac{\partial (\Delta \mathcal{L})}{\partial (\delta \vz)} = 0 
\quad \text{and} \quad 
\frac{\partial (\Delta \mathcal{L})}{\partial (\delta \bLambda)} = 0,
\]
yielding:
\[
\begin{bmatrix}
g_{\vz \vz} + \bar{\bLambda}\otimes \mathbf{h}_{\vz \vz} && \mathbf{h}_{\vz}\tran \\
\mathbf{h}_{\vz} && \boldsymbol{0}
\end{bmatrix}
\begin{bmatrix}
\delta \vz \\[6pt]
\delta \bLambda
\end{bmatrix}
=
\begin{bmatrix}
- g_{\vz} - \mathbf{h}_{\vz}\tran\bar{\bLambda} \\[6pt]
-\bar{\mathbf{h}}
\end{bmatrix}
\]

However, taking the \( \mathbf{h}_\vz\tran \bar{\bLambda}\) term to the LHS gives:
\[
\begin{bmatrix}
g_{\vz \vz} + \bar{\bLambda}\otimes \mathbf{h}_{\vz \vz} && \mathbf{h}_{\vz}\tran \\
\mathbf{h}_{\vz} && \boldsymbol{0}
\end{bmatrix}
\begin{bmatrix}
\delta \vz \\[6pt]
\bar{\bLambda} + \delta \bLambda
\end{bmatrix}
=
\begin{bmatrix}
- g_{\vz} \\[6pt]
-\bar{\mathbf{h}}
\end{bmatrix}
\]
Here, note that \(\bLambda = \bar{\bLambda} + \delta \bLambda\) is simply the new \(\bLambda\) (the updated $\bLambda$ at new iteration).

However, the necessary conditions above are simply the solution to the QP problem:
\begin{align}
\min_{\delta \vz} \; g_\vz \delta \vz + \tfrac{1}{2} \delta \vz\tran \big(g_{\vz\vz} + \bar{\bLambda} \otimes \mathbf{h}_{\vz \vz}\big) \delta \vz \nonumber \\
\text{s.t.} \quad \mathbf{h}_{\vz} \delta \vz + \bar{\mathbf{h}} = 0. \label{N-QP}
\end{align}

Hence, the Newton step can be accomplished by solving the above QP for the solution \((\delta \vz^q, \bLambda^q)\) and updating the solution as:
\[
\vz = \bar{\vz} + \delta \vz^q, 
\quad 
\bLambda = \bLambda^q.
\]

Thus, the original constrained optimization can be solved by solving a sequence of QP problems above, and this will be termed as the Newton SQP approach. \\
However, in general, it is not advisable to use the ``true" Hessian of the Lagrangian \(\big(g_{\vz \vz} + \bar{\bLambda}\otimes \mathbf{h}_{\vz \vz}\big)\) in the QP unless one is close to an optimum.

To see this, note that in the constrained case, the descent direction 
$\delta \vZ = \begin{pmatrix} \delta \vz \\ \delta \bLambda \end{pmatrix}$ 
is given by:
\[
\delta \vZ = (\nabla^2 \mathcal{L})^{-1} \nabla \mathcal{L},
\]
and
\[
\nabla^2 \mathcal{L} = 
\begin{bmatrix}
g_{\vz \vz} + \bar{\bLambda} \otimes \mathbf{h}_{\vz \vz} && \mathbf{h}_{\vz} \\
\mathbf{h}_{\vz} && \boldsymbol{0}
\end{bmatrix}
\quad \nsucc 0
\]
if the current solution is far from an optimum.  

If the Hessian $\nabla^2 \mathcal{L} \succ 0$, the Newton direction can be assured to be a descent direction, since then:
\[
\delta \vZ' \nabla \mathcal{L} = \nabla \mathcal{L}' (\nabla^2 \mathcal{L})^{-1} \nabla \mathcal{L} > 0.
\]
However, if $\nabla^2 \mathcal{L} \nsucc 0$, there is no such guarantee.  

Thus, in order to be able to use the SQP approach, it becomes critical that one be able to ensure that the QP solution is a descent direction for the cost function.  

Consider now a modified problem (QP):
\begin{align}
\min_{\delta x} \; g_\vz \delta \vz + \tfrac{1}{2} \, \delta \vz\tran g_{\vz \vz} \delta \vz \nonumber\\
\mathbf{h}_{\vz} \delta \vz + \bar{\mathbf{h}} = 0. \label{M-QP}
\end{align}

We make the assumption that $\mathbf{h}(\bar{\vz}) = \bar{\mathbf{h}} = \boldsymbol{0}$, i.e., our current estimate is feasible.  

The necessary conditions for this QP are:
\begin{align}
g_\vz + g_{\vz \vz} \delta \vz + \mathbf{h}_{\vz}\tran \bLambda &= \boldsymbol{0},  \\
\bar{\mathbf{h}} + \mathbf{h}_\vz \delta \vz &= \boldsymbol{0}. 
\end{align}



Since $g(\vz)$ is a cost function by design, it is a reasonable assumption that $g_{\vz \vz} \succ 0$ for $\vz$.  

Hence, the solution to the modified QP \eqref{M-QP} is always a descent direction for the cost function.  
Furthermore,
\[
\mathbf{h}(\bar{\vz} + \delta \vz) \approx \mathbf{h}(\bar{\vz}) + \mathbf{h}_\vz \delta \vz = \boldsymbol{0}
\]
if $\delta \vz$ is small, and thus the update $\vz = \bar{\vz} + \delta \vz$ also satisfies the constraint. If $\delta \vz$ is not small enough, we can do a line search using a parameter $\alpha \in (0,1]$ such that
\[
\mathbf{h}(\vz) = \mathbf{h}(\bar{\vz} + \alpha \delta \vz) \approx \mathbf{h}(\bar{\vz}) + \alpha \mathbf{h}_\vz \delta \vz = 0,
\]
and thus $\vz = \bar{\vz} + \alpha \delta \vz$ is feasible.  

Hence, if we have a feasible solution, the modified QP always gives a descent direction for the cost while ensuring that the solution remains feasible if allied with a suitable line search approach.  
The development above can be summarized as the following result:  

\medskip

\begin{proposition} 
Consider the modified QP \eqref{M-QP}. Given $g_{\vz \vz} \succ 0$, the solution to the QP, $\delta \vz$, is always a descent direction for the cost function $g(\vz)$. Furthermore, the new solution $\vz = \bar{\vz} + \alpha \delta \vz$ can be made feasible, i.e., $\mathbf{h}(\vz) = \boldsymbol{0}$, by a suitable choice of the line-search parameter, $\alpha \in (0,1]$.
\end{proposition}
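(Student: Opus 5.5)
The plan is to use the KKT system of the modified QP \eqref{M-QP} together with the standing feasibility assumption $\bar{\mathbf{h}}=\boldsymbol{0}$. \textbf{Descent.} With $\bar{\mathbf{h}}=\boldsymbol{0}$, the constraint reduces to $\mathbf{h}_\vz\delta\vz=\boldsymbol{0}$, so $\delta\vz$ lies in the tangent space $\mathcal{N}=\ker \mathbf{h}_\vz$. Multiplying the stationarity condition $g_\vz\tran + g_{\vz\vz}\delta\vz + \mathbf{h}_\vz\tran\bLambda=\boldsymbol{0}$ on the left by $\delta\vz\tran$ kills the multiplier term, since $\delta\vz\tran\mathbf{h}_\vz\tran\bLambda=(\mathbf{h}_\vz\delta\vz)\tran\bLambda=0$. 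This gives
\[
g_\vz\,\delta\vz \;=\; -\,\delta\vz\tran g_{\vz\vz}\,\delta\vz \;<\;0 \quad\text{whenever } \delta\vz\neq\boldsymbol{0},
\]
by $g_{\vz\vz}\succ0$. So $\delta\vz$ is a strict descent direction for $g$ at $\bar\vz$. If instead $\delta\vz=\boldsymbol{0}$, then $g_\vz\tran=-\mathbf{h}_\vz\tran\bLambda$, i.e.\ $\bar\vz$ is already a KKT point. Existence and uniqueness of the QP solution follow because $g_{\vz\vz}\succ0$ makes the objective strictly convex on the affine feasible set, which is nonempty since it contains $\delta\vz=\boldsymbol{0}$.

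\textbf{Reduced-descent interpretation.} Descent should also hold along the constraint manifold, which is the relevant notion since $g$ is only evaluated at feasible points. The directional derivative of $g$ along $\delta\vz$ equals its projection onto $\mathcal{N}$, which is exactly the computation above. By first-order Taylor expansion (Assumption~\ref{assum:smoothness}), $g(\bar\vz+\alpha\delta\vz)=g(\bar\vz)+\alpha g_\vz\delta\vz+o(\alpha)<g(\bar\vz)$ for all sufficiently small $\alpha>0$.

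\textbf{Feasibility.} This is the step I expect to be the main obstacle, and the statement must be read in the paper's first-order sense. Since $\mathbf{h}(\bar\vz)=\boldsymbol{0}$ and $\mathbf{h}_\vz\delta\vz=\boldsymbol{0}$, Taylor's theorem with $\mathbf{h}\in\mathcal{C}^2$ gives
\[
\mathbf{h}(\bar\vz+\alpha\delta\vz)=O(\alpha^2\|\delta\vz\|^2),
\]
so the constraint violation is second order in $\alpha$ and can be made arbitrarily small relative to the step by shrinking $\alpha$. Exact feasibility for a pure straight-line step does not hold in general, for example with curved constraints.

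To make the claim rigorous, I would first state it as ``feasible to first order, with violation $O(\alpha^2)$.'' For the optimal-control instance, where $\mathbf{h}$ encodes the dynamics \eqref{eq:dynamics}, I would then note that exact feasibility is obtained by applying $\alpha\delta\vu_t$ and re-simulating $\vf$ forward to recover the states. The resulting state update differs from $\alpha\delta\vx_t$ only by $O(\alpha^2)$, so the descent inequality is preserved for small $\alpha\in(0,1]$. A backtracking line search then selects such an $\alpha$.
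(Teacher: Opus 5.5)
Your proof is correct and follows the paper's route: the KKT conditions of \eqref{M-QP} with $\bar{\mathbf{h}}=\boldsymbol{0}$ and $g_{\vz\vz}\succ 0$ give descent, and a first-order Taylor expansion of $\mathbf{h}$ together with a line search on $\alpha$ gives feasibility. You go beyond the paper in two places: you make explicit the identity $g_\vz\delta\vz=-\delta\vz\tran g_{\vz\vz}\delta\vz$, which the paper only asserts with a ``Hence'', and you correctly observe that the straight-line step is feasible only up to an $O(\alpha^2)$ residual, which is all the paper's ``$\approx$'' argument actually establishes, with exact feasibility in the optimal-control setting coming from re-simulating the nonlinear dynamics in the forward pass.
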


\subsection{Background of iterative perturbation feedback methods}

The motivation for introducing SQP in the preceding section was to establish a link to the algorithms compared in this work. Before presenting that comparison, we first develop the necessary background for these algorithms. In the following, we reiterate how Differential Dynamic Programming (DDP), stagewise Newton methods, and the iterative Linear Quadratic Regulator (iLQR) utilize the sparsity structure inherent in the optimal control problem described by eq.~\eqref{eq:ocp} and eq.~\eqref{eq:dynamics}. This exploitation of sparsity leads to significantly reduced computational complexity compared to conventional Sequential Quadratic Programming (SQP).

To begin with, consider a feasible control sequence and a corresponding trajectory $(\bar{\vx}_t, \bar{\vu}_t)$ can be constructed by evolving the dynamics.
Next, the augmented cost (using eqs. \eqref{eq:ocp} and \eqref{eq:dynamics}) can be written as:
\begin{equation}
    \min_{\vx_t, \vu_{t}, \blambda_t} \sum_{t=0}^{T-1}\Big( c(\vx_{t}, \vu_{t}) + \blambda_{t+1}\tran(\vf(\vx_{t}, \vu_{t}) - \vx_{t+1}) \Big) + C_{T}(\vx_{T})
\end{equation}
where $\blambda_{t+1} \in \mathbb{R}^{n}$ are the Lagrange multipliers/costates. 

In addition, let $(\bar{\vx}_t, \bar{\vu}_t, \bar{\blambda}_t)$ be the current guess to the problem. This augmented cost can be expanded about $(\bar{\vx}_{t}, \bar{\vu}_{t}, \bar{\blambda}_{t})$ to get
\begin{dmath}
\bar{\cJ}+\delta\cJ+\delta^{2}\cJ
=\min_{\delta \vx_t,\;\delta \vu_t,\;\delta \blambda_t}\Bigg(
\sum_{t=0}^{T-1}\Big(
c(\bar{\vx}_{t},\bar{\vu}_{t})
+\bar{\blambda}_{t+1}\tran\big(\vf(\bar{\vx}_{t},\bar{\vu}_{t})-\bar{\vx}_{t+1}\big)
\Big)
+ C_{T}(\bar{\vx}_{T}) 
+\sum_{t=0}^{T-1}\Big(
c_{\vx_{t}}\delta \vx_{t}
+c_{\vu_{t}}\delta \vu_{t}
+\delta \blambda_{t+1}\tran\big(\bar{\vx}_{t+1}-\vf(\bar{\vx}_{t},\bar{\vu}_{t})\big)
+\bar{\blambda}_{t+1}\tran\big(
\vf_{\vx_{t}}\delta \vx_{t}
+\vf_{\vu_{t}}\delta \vu_{t}
-\delta \vx_{t+1}
\big)
+\tfrac{1}{2}\delta \vx_{t}\tran c_{\vx_{t} \vx_{t}} \delta \vx_{t}
+\tfrac{1}{2}\delta \vu_{t}\tran c_{\vu_{t} \vu_{t}} \delta \vu_{t}
+\delta \vx_{t}\tran c_{\vx_{t}\vu_{t}} \delta \vu_{t}
+\bar{\blambda}_{t+1}\tran\Big(
\tfrac{1}{2}\delta \vx_{t}\tran f_{\vx_{t} \vx_{t}} \delta \vx_{t}
+\delta \vx_{t}\tran \vf_{\vx_{t} \vu_{t}} \delta \vu_{t}
+\tfrac{1}{2}\delta \vu_{t}\tran f_{\vu_{t} \vu_{t}} \delta \vu_{t}
\Big)
+\delta \blambda_{t+1}\tran\big(
\vf_{\vx_{t}}\delta \vx_{t}
+\vf_{\vu_{t}}\delta \vu_{t}
-\delta \vx_{t+1}
\big)
\Big)
+ C_{T,\vx}\delta \vx_{T}
+\tfrac{1}{2}\delta \vx_{T}' C_{T,\vx\vx} \delta \vx_{T}
\Bigg).
\label{eq:per_optim}
\end{dmath}
The necessary conditions for the above QP are:
\begin{subequations}
    \begin{dmath}
        \vf(\bar{\vx}_t,\bar{\vu}_t) + \vf_{\vx_t}\delta \vx_t + \vf_{\vu_t}\delta \vu_t - \delta \vx_{t + 1} - \bar{\vx}_{t+1} = \bo
        \label{eq:lin_dyn}
    \end{dmath}
    \begin{dmath}
        c_{\vx_t} + c_{\vx_t \vx_t}\delta \vx_t + c_{\vx_t \vu_t}\delta \vu_t
        +(\vf_{\vx_t \vx_t} \delta \vx_{t} + \vf_{\vx_t \vu_{t}}\delta \vu_{t})\tran\bar{\blambda}_{t+1} + \vf_{\vx_t}\tran(\bar{\blambda}_{t+1} + \delta \blambda_{t+1})
        -(\bar{\blambda}_t + \delta \blambda_t) = \bo
        \label{eq:state_DDP}
    \end{dmath}
    \begin{dmath}
        c_{\vu_t} + c_{\vu_t \vu_t}\delta \vu_t + c_{\vu_t \vx_t} \delta \vx_t
        +(\vf_{\vu_t \vu_t} \delta \vu_{t} + \vf_{\vu_t, \vx_{t}}\delta \vx_{t})\tran \bar{\blambda}_{t+1} + \vf_{\vu_t}\tran(\bar{\blambda}_{t+1} + \delta \blambda_{t+1}) = \bo
        \label{eq:con_DDP}
    \end{dmath}    
\end{subequations}
with the boundary/transversality condition:
\begin{align}
    \bar{\blambda}_{T} + \delta \blambda_{T} = C_{T,\vx} + C_{T, \vx\vx}\delta \vx_{T}, \nonumber \\
    \bar{\blambda}_{T} = C_{T,\vx} \quad \text{and} \quad \delta \blambda_{T} = C_{T,\vx\vx} \delta \vx_{T}.
    \label{eq:transver}
\end{align}
Conventional SQP solves this problem by posing this problem as:
\begin{equation*}
    \mathbf{H}\begin{bmatrix}
        \delta \vx_t\\
        \delta \vu_t\\
        \delta \blambda_t
    \end{bmatrix} + \mathbf{g} = \boldsymbol{0} \Rightarrow \begin{bmatrix}
        \delta \vx_t\\
        \delta \vu_t\\
        \delta \blambda_t
    \end{bmatrix} = -\mathbf{H}^{-1}\mathbf{g},
\end{equation*}
where $\mathbf{H}$ is the hessian and $\mathbf{g}$ is the gradient respectively. It gives the cushion of adding additional constraints, other than the dynamics in eq.~\eqref{eq:dynamics}.
However, the matrix $\mathbf{H}$ is of the order $(m+2n)T$, and hence, inverting this matrix is computationally very heavy.\\ 
A class of algorithms (DDP/Stagewise Newton/iLQR), referred to as ``Iterative perturbation feedback methods" in this paper, use the sparsity of this optimization problem. These algorithm use a forward-backward structure to exploit the sparsity and solve this problem with a time complexity $\mathcal{O}(T)$. There is a forward and a backward pass composing these algorithms. In the forward pass, the dynamics eq. \eqref{eq:dynamics} is propagated while computing the update to control. While in the backward pass, one implicitly solves for the multipliers and a linear feedback update to the control. The solution is made feasible using a suitable line search parameter $\alpha$.

We give an overview of these algorithms starting with ``DDP". First, it starts with an initial guess of $\{\vu_{t}\}_{t=0}^{T-1}$ and initializes the trajecotry using the dynamics $\bar{\vx}_{t+1} = \vf(\bar{\vx}_t,\bar{\vu}_t)$ in the forward pass. Following this, the control update $\delta \vu_t$ is solved by leveraging the sparsity of variables in the backward pass. The backward pass begins at the last time step  with the transversality condition in eq. \eqref{eq:transver} and then eq. \eqref{eq:con_DDP} is used to find the control update. Furthermore, $\delta \vx_{T}$ is replaced by linearized dynamics using eq. \eqref{eq:lin_dyn} in terms of $\delta \vu_{t-1}$  and $\delta \vx_{t-1}$. The equation which follows from \eqref{eq:con_DDP} at time-step $T-1$ is
\begin{dmath*}
    c_{\vu_{T-1}} + c_{\vu_{T-1}\vu_{T-1}}\delta \vu_{T-1} + c_{\vu_{T-1}\vx_{T-1}}\delta \vx_{T-1} + (\vf_{\vu_{T-1}\vu_{T-1}}\delta \vu_{T-1} + \vf_{\vu_{T-1}\vx_{T-1}}\delta \vx_{T-1})\tran\bar{\blambda}_{T} +
    \vf_{\vu_{T-1}}\tran(\bar{\blambda}_{T} + \delta \blambda_{T}) = \bo.
\end{dmath*}
Using the transversality condition in eq. \eqref{eq:transver}, 
\begin{dmath*}
    c_{\vu_{T-1}} + c_{\vu_{T-1}\vu_{T-1}}\delta \vu_{T-1} + c_{\vu_{T-1}\vx_{T-1}}\delta \vx_{T-1}+ (\vf_{\vu_{T-1}\vu_{T-1}}\delta \vu_{T-1} + \vf_{\vu_{T-1}\vx_{T-1}}\delta \vx_{T-1})\tran C_{T,\vx}
    +\vf_{\vu_{T-1}}\tran(C_{T,\vx} + C_{T,\vx\vx}\delta \vx_{T}) = \bo.
\end{dmath*}
Furthermore, using the linearized dynamics in eq. \eqref{eq:lin_dyn} and replacing $\delta \vx_{T}$ in the above equation, we get
\begin{dmath*}
    c_{\vu_{T-1}} + c_{\vu_{T-1}\vu_{T-1}}\delta \vu_{T-1} + c_{\vu_{T-1}\vx_{T-1}}\delta \vx_{T-1} 
    + (\vf_{\vu_{T-1}\vu_{T-1}}\delta \vu_{T-1} + \vf_{\vu_{T-1}\vx_{T-1}}\delta \vx_{T-1})\tran C_{T,\vx}
    +\vf_{\vu_{T-1}}'(C_{T,\vx} + C_{T,\vx\vx}(\vf_{\vx_{T-1}} \delta \vx_{T-1} + \vf_{\vu_{T-1}}\delta \vu_{T-1})) = \bo.
\end{dmath*}
The above equation is linear in terms of $\delta \vu_{T-1}$ and the solution to it is:
\begin{dmath}
    \delta \vu_{T-1} = 
    -\Big{(} c_{\vu_{T-1}\vu_{T-1}} + \vf_{\vu_{T-1}\vu_{T-1}} \otimes C_{T,\vx} + \vf_{\vu_{T-1}}\tran C_{T,\vx\vx}\vf_{\vu_{T-1}}\Big{)}^{-1}
    \Bigg{(}c_{\vu_{T-1}} + \vf_{\vu_{T-1}}\tran C_{T,x} +
    \Big{(} c_{\vu_{T-1}\vx_{T-1}} + \vf_{\vu_{T-1}\vx_{T-1}} \otimes C_{T,\vx} + \vf_{\vu_{T-1}}C_{T,\vx\vx}\vf_{\vx_{T-1}}\Big{)}\delta \vx_{T-1}\Bigg{)}.
\end{dmath}
The control update in the above equation can be written in short as
\begin{align}
    \delta \vu_{T-1} = -(Q_{\vu\vu}^{T-1})^{-1}(Q_{\vu}^{T-1} + Q_{\vu\vx}^{T-1}\delta \vx_{T-1}).
    \label{eq:con_terminal}
\end{align}
The above term is the linear feedback for the DDP. The terms $Q_{\vu\vu}$,$Q_{\vu\vx}$ and $Q_{\vu}$ are
\begin{align*}
    &Q_{\vu\vu}^{T-1} = c_{\vu_{T-1}\vu_{T-1}} + \vf_{\vu_{T-1}\vu_{T-1}} \otimes C_{T,\vx} + \vf_{\vu_{T-1}}\tran C_{T,\vx\vx}\vf_{\vu_{T-1}}.\\
    &Q_{\vu\vx}^{T-1} = c_{\vu_{T-1}\vx_{T-1}} + \vf_{\vu_{T-1}\vx_{T-1}} \otimes C_{T,\vx} + \vf_{\vu_{T-1}}C_{T,\vx\vx}\vf_{\vx_{T-1}}.\\
   &Q_{\vu}^{T-1} = c_{\vu_{T-1}} + \vf_{\vu_{T-1}}\tran C_{T,\vx}.
\end{align*}

Hence, eq. \eqref{eq:con_DDP} is solved to get a linearized feedback control update at the terminal step, $\delta \vu_{T-1} = \vk_{T-1} + \vK_{T-1}\delta \vx_{T-1}$. This update is used in eq. \eqref{eq:state_DDP}, along with linearized dynamics from eq. \eqref{eq:lin_dyn}, to get a backward pass on $\bar{\blambda}_{T}$ (represented as $\bv_T$) and $\mathbf{S}_{T}$, where $\delta \lambda_{T} = \mathbf{S}_{T} \delta \vx_{T}$.
It comes from eq. \eqref{eq:state_DDP}:
\begin{dmath*}
    c_{\vx_{T-1}} + c_{\vx_{T-1}\vx_{T-1}}\delta \vx_{T-1} + c_{\vx_{T-1}\vu_{T-1}}\delta \vu_{T-1}
    +(\vf_{\vx_{T-1}\vx_{T-1}}\delta \vx_{T-1}+ \vf_{\vx_{T-1}\vu_{T-1}}\delta \vu_{T-1})\tran \bar{\blambda}_{T} + \vf_{\vx_{T-1}}\tran (\bar{\blambda}_{T} + \delta \blambda_{T})
    -(\bar{\blambda}_{T-1} + \delta \blambda_{T-1}) = \boldsymbol{0}.
\end{dmath*}
Substituting the value of control $\delta \vu_{T-1}$ from eq. \eqref{eq:con_terminal} and using the transversality condition one gets
\begin{dmath*}
    c_{\vx_{T-1}} + c_{\vx_{T-1}\vx_{T-1}}\delta \vx_{T-1} + c_{\vx_{T-1}\vu_{T-1}}(-Q_{\vu\vu}^{-1}(Q_{\vu}^{T-1} + Q_{\vu\vx}^{T-1}\delta \vx_{T-1})) 
    +(\vf_{\vx_{T-1}\vx_{T-1}}\delta \vx_{T-1}+ \vf_{\vx_{T-1}\vu_{T-1}}(-Q_{\vu\vu}^{-1}(Q_{\vu}^{T-1} + Q_{\vu\vx}^{T-1}\delta \vx_{T-1})))\tran C_{T,\vx} 
    +\vf_{\vx_{T-1}}\tran (C_{T,\vx} + C_{T,\vx\vx}\delta \vx_{T})
    -(\bar{\blambda}_{T-1} + \delta \blambda_{T-1}) = \bo.
\end{dmath*}
Using linearized dynamics for $\delta \vx_{T}$ from eq. \eqref{eq:lin_dyn} and replacing the control from eq. \eqref{eq:con_terminal}, the above expression becomes
\begin{dmath*}
    c_{\vx_{T-1}} + c_{\vx_{T-1}\vx_{T-1}}\delta \vx_{T-1} + c_{\vx_{T-1}\vu_{T-1}}(-(Q_{\vu\vu}^{T-1})^{-1}(Q_{\vu}^{T-1} + Q_{\vu\vx}^{T-1}\delta \vx_{T-1})) 
    +(\vf_{\vx_{T-1},x_{T-1}}\delta \vx_{T-1}+ \vf_{\vx_{T-1}\vu_{T-1}}(-(Q_{\vu\vu}^{T-1})^{-1}(Q_{\vu}^{T-1} + Q_{\vu\vx}^{T-1}\delta \vx_{T-1})))\tran C_{T,\vx} 
    + \vf_{\vx_{T-1}}\tran (C_{T,\vx} + C_{T,\vx\vx}(\vf_{\vx_{T-1}}\delta \vx_{T-1} + \vf_{\vu_{T-1}}
    (-(Q_{\vu\vu}^{T-1})^{-1}(Q_{\vu}^{T-1} + Q_{\vu\vx}^{T-1}\delta \vx_{T-1}))))
    -(\bar{\blambda}_{T-1} + \delta \blambda_{T-1}) = \bo.
    \label{eq:back_pass1}
\end{dmath*}
It is evident from the above equation that $\bar{\blambda}_{T-1}$ and $\delta \blambda_{T-1}$ can be separated using the form $\bv_{T-1} + \vS_{T-1}\delta \vx_{T-1}$. Separating out the constant terms gives
\begin{dmath*}
    c_{\vx_{T-1}} - c_{\vx_{T-1}\vu_{T-1}}(Q_{\vu\vu}^{T-1})^{-1}Q_{\vu}^{T-1} - (\vf_{\vx_{T-1}\vu_{T-1}}(Q_{\vu\vu}^{T-1})^{-1}Q_{\vu}^{T-1})\tran C_{T,\vx}
    +\vf_{\vx_{T-1}}\tran C_{T,\vx} - \vf_{\vx_{T-1}}\tran C_{T,\vx\vx}\vf_{\vu_{T-1}}(Q_{\vu\vu}^{T-1})^{-1}Q_{\vu}^{T-1} - \bv_{T-1} = \bo.
\end{dmath*}
Rearranging the terms give,
\begin{dmath*}
    c_{\vx_{T-1}} + \vf_{\vx_{T-1}}\tran C_{T,\vx}
    -(c_{\vx_{T-1}\vu_{T-1}} + C_{T,\vx}\otimes \vf_{\vx_{T-1}\vu_{T-1}} + \vf_{\vx_{T-1}}\tran C_{T,\vx\vx}\vf_{\vu_{T-1}})(Q_{\vu\vu}^{T-1})^{-1}Q_{\vu}^{T-1} = \bv_{T-1},
\end{dmath*}
or in short,
\begin{dmath}
    \bv_{T-1} = Q_{\vx}^{T-1} - Q_{\vx\vu}^{T-1}(Q_{\vu\vu}^{T-1})^{-1}Q_{\vu}^{T-1},
\end{dmath}
where
\begin{align*}
    &Q_{\vx}^{T-1} = c_{\vx_{T-1}} + \vf_{\vx_{T-1}}\tran C_{T,\vx}.\\
    &Q_{\vx\vu}^{T-1} =  c_{\vx_{T-1}\vu_{T-1}} +  C_{T,\vx}\otimes \vf_{\vx_{T-1}\vu_{T-1}}+ \vf_{\vx_{T-1}}\tran C_{T,\vx\vx}\vf_{\vu_{T-1}}.
\end{align*}
Similarly, the terms involving $\delta \vx_{T-1}$ in eq. \eqref{eq:back_pass1} are:
\begin{dmath*}
    c_{\vx_{T-1}\vx_{T-1}} - c_{\vx_{T-1}\vu_{T-1}}(Q_{\vu\vu}^{T-1})^{-1}Q_{\vu\vx} + (\vf_{\vx_{T-1}\vx_{T-1}}\otimes C_{T,\vx})
    - (\vf_{\vx_{T-1}\vu_{T-1}}\otimes C_{T,\vx}  Q_{\vu\vu}^{-1}Q_{\vu\vx}) + \vf_{\vx_{T-1}}\tran C_{T,\vx\vx}\vf_{\vx_{T-1}} 
    - \vf_{\vx_{T-1}}\tran C_{T,\vx\vx}\vf_{\vu_{T-1}}Q_{\vu\vu}^{-1}Q_{\vu\vx} = \vS_{T-1},
\end{dmath*}
or in short,
\begin{align}
    \vS_{T-1} = Q_{\vx\vx}^{T-1} - Q_{\vx\vu}^{T-1}(Q_{\vu\vu}^{T-1})^{-1}Q_{\vu\vx}^{T-1}
\end{align}
where
\begin{dmath*}
    Q_{\vx\vx}^{T-1} = c_{\vx_{T-1}\vx_{T-1}} +  \vf_{\vx_{T-1} \vx_{T-1}} \otimes C_{T,\vx} +  \vf_{\vx_{T-1}}\tran C_{T,\vx\vx}\vf_{\vx_{T-1}}
\end{dmath*}
Now, this can be generalized for every step to get $\bv_{t}$ and $\vS_{t}$ and these can be used to calculate $\vu_{t} = \vk_{t} + \vK_{t}\delta \vx_{t}$. 
In  a nutshell, eq. \eqref{eq:state_DDP} becomes an equation which can be separated into constants and linear terms in $\delta x_{t}$. This is then used to generate the backward pass on $\bv_t (\bar{\blambda}_{t})$ and $\vS_{t}$. This process is repeated to obtain all $\delta \vu_{t}$ and update the trajectory. The general equations for backward pass are:
\begin{align}
    \bv_{t} = Q_{\vx}^{t} - (Q_{\vu}^{t})\tran (Q_{\vu\vu}^{t})^{-1}Q_{\vu\vx}^{t}, \label{eq:back_pass_DDP2}\\
    \vS_{t} = Q_{\vx\vx}^{t} - Q_{\vx\vu}^{t}(Q_{\vu\vu}^{t})^{-1}Q_{\vu\vx}^{t},
    \label{eq:back_pass_DDP}
\end{align}
where
\begin{align*}
    &Q_{\vx}^{t} = c_{\vx_{t}} +  \vf_{\vx_t}\tran \bv_{t+1} \\
    &Q_{\vu}^{t} = c_{\vu_{t}} +  \vf_{\vu_t} \tran \bv_{t+1}\\
    &Q_{\vx\vx}^{t} = c_{\vx_t\vx_t} + \vf_{\vx_t}\tran \vS_{t+1}\vf_{\vx_t} + \bv_{t+1}\otimes  \vf_{\vx_t \vx_t} \\
    &Q_{\vu\vu}^{t} = c_{\vu_t\vu_t} + \vf_{\vu_t}\tran \vS_{t+1}\vf_{\vu_t} + \bv_{t+1}\otimes \vf_{\vu_t \vu_t}\\
    &Q_{\vu\vx}^{t} = c_{\vu_t\vx_t} + \vf_{\vu_t}\tran \vS_{t+1}\vf_{\vx_t} + \bv_{t+1} \otimes \vf_{\vu_t \vx_t}
\end{align*}
with the terminal conditions:
\begin{align*}
    \vS_{T} = C_{T,\vx\vx} \quad \text{and} \quad \bv_{T} = C_{T,\vx}.
\end{align*}
The term $\vk_{t} = -(Q_{\vu\vu}^{t})^{-1}Q_{\vu}^{t}$ and $\vK_{t} = -(Q_{\vu\vu}^{t})^{-1}Q_{\vu\vx}^{t}$. DDP computes $\vk_{t}$ and $\vK_{t}$ for all time steps and then updates the control $\vu_{t} = \bar{\vu}_{t} + \alpha\vk_{t} + \vK_{t}\delta \vx_{t}$ where $\delta \vx_{t}$ is the perturbation given by the system after applying $\delta \vu_{t-1}$ which is $\delta \vx_{t} =  \vf(\bar{\vx}_{t-1} + \delta \vx_{t-1}, \bar{\vu}_{t-1} + \delta \vu_{t-1}) - \vf(\bar{\vx}_{t-1}, \bar{\vu}_{t-1})$, with $\delta \vx_{0} = 0$ as the initial state, $\vx_{0}$, is given.

The perturbed cost can be simplified to find the expected reduction in cost by substituting $\delta \vu_{t} = \alpha \vk_{t} + \vK_{t}\delta \vx_{t}$ and the backward pass equations given in eq. \eqref{eq:back_pass_DDP} to get
\begin{equation}
    \delta \cJ^{*} + \delta^{2}\cJ^{*} = -\Big(\alpha - \frac{\alpha^{2}}{2}\Big)\sum_{t=0}^{T-1}(Q_{\vu}^{t})\tran (Q_{\vu\vu}^{t})^{-1}Q_{\vu}^{t}
    \label{eq:exp_cost_red}
\end{equation}
which is the expected optimal reduction in cost. One can see that $Q_{\vu\vu}^{t}$ has to be positive definite for a guarantee in cost reduction at each time step $t$. 
\begin{remark}
    The matrix $Q_{\vu\vu}^{t}$ in DDP is often not positive definite due to the second order terms $\vf_{\vx_t\vx_t},\vf_{\vu_t\vx_t}$ and $\vf_{\vu_t\vu_t}$, which depends on both the system dynamics and the current nominal trajectory. This poses the same problems as the QP in eq.~\eqref{N-QP}. As a result, DDP typically requires substantial regularization. This issue also arises in the ``Stagewise Newton" approach, as will be discussed next.
\end{remark}

The method ``Stagewise Newton" solves the same set of necessary conditions as in eqs. \eqref{eq:lin_dyn}-\eqref{eq:con_DDP}, like DDP. However, there are two key differences. In the forward pass, it uses linearized dynamics to update the control. So, $\vu_{t} = \bar{\vu}_{t} + \alpha \vk_{t} + \vK_{t}\delta\hat{\vx}_{t}$, where $\delta \hat{\vx}_{t} = \vf_{\vx_{t-1}}\delta \hat{\vx}_{t-1} + \vf_{\vu_{t-1}}\delta \vu_{t-1}$. 

In addition to this, one replaces $\bar{\blambda}_{t} + \delta \blambda_{t} $ to a new variable $\vl_{t}$ in eqs. \eqref{eq:state_DDP} and \eqref{eq:con_DDP}. $\vl_{t}$ takes the form $\vS_{t}\delta \vx_{t} + \bv_{t}$ if the necessary conditions are solved in this manner. So, the necessary conditions become:
\begin{subequations}
    \begin{equation}
        \vf_{\vx_t}\delta \vx_t + \vf_{\vu_t}\delta \vu_t - \delta \vx_{t + 1} = \bo.
        \label{eq:lin_dyn_SN}
    \end{equation}
    \begin{equation*}
        c_{\vx_t} + c_{\vx_t \vx_t}\delta \vx_t + c_{\vx_t \vu_t}\delta \vu_t 
    \end{equation*}
    \begin{equation}
        +(\vf_{\vx_t \vx_t} \delta \vx_{t} + \vf_{\vx_t \vu_{t}}\delta \vu_{t})\tran \bar{\blambda}_{t+1} + \vf_{\vx_t}\tran \vl_{t+1}
        -\vl_t = 0
        \label{eq:state_SN}
    \end{equation}
    \begin{equation*}
        c_{\vu_t} + c_{\vu_t \vu_t}\delta \vu_t + c_{\vu_t \vx_t} \delta \vx_t
    \end{equation*}
    \begin{equation}
        +(\vf_{\vu_t \vu_t} \delta \vu_{t} + \vf_{\vu_t \vx_{t}}\delta \vx_{t})\tran \bar{\blambda}_{t+1} + \vf_{u_t}\tran \vl_{t+1} = 0
        \label{eq:con_SN}
    \end{equation}    
\end{subequations}
Additionally, rather than using $\bar{\blambda}_{t}$ from the previous iteration (which is done in a typical SQP solver), it enforces linearity of $\bar{\blambda}$ in terms of $\delta\vx$ and $\delta \vu$. The backward pass on $\bar{\blambda}$ comes from eq. \eqref{eq:state_SN}. Given that $\vl_{t}^{\text{iter}-1} = \bar{\blambda}_{t}^{\text{iter}}$ and $\vl_{t+1}^{\text{iter}-1} = \bar{\blambda}_{t+1}^{\text{iter}}$ , where ``iter" denotes the current iteration number. This along with the expansions
\begin{align*}
   &c_{\vx_t}^{\text{iter}} = c_{\vx_t}^{\text{iter}-1} + c_{\vx_t \vx_t}^{\text{iter}-1}\delta \vx_t + c_{\vx_t \vu_t}^{\text{iter}-1}\delta \vu_t  + \text{H.O.T.} \\
    &\vf_{\vx_t}^{\text{iter}} = \vf_{\vx_t}^{\text{iter}-1} + \vf_{\vx_t\vx_t}^{\text{iter}-1}\delta \vx_t + \vf_{\vx_t\vu_t}^{\text{iter}-1}\delta \vu_t + \text{H.O.T.}, 
\end{align*}
 and using eq. \eqref{eq:state_SN}, one can write
\begin{equation*}
    \bar{\blambda}_{t}^{\text{iter}} = \vl_{t}^{\text{iter}-1} =  \underbrace{c_{\vx_t} + c_{\vx_t \vx_t}\delta \vx_t + c_{\vx_t \vu_t}\delta \vu_t 
        +(\vf_{\vx_t\vx_t} \delta \vx_{t} + \vf_{\vx_t \vu_{t}}\delta \vu_{t})\tran \bar{\blambda}_{t+1} + \vf_{\vx_t}\tran \vl_{t+1}}_{\text{all derivatives at (iter - 1)}},     
\end{equation*}
which under linearity assumption gives:
\begin{equation}
    \bar{\blambda}_{t}^{\text{iter}} = c_{\vx_t}^{\text{iter}} +  (\vf_{\vx_t}^{\text{iter}})\tran \bar{\blambda}_{t+1}^{\text{iter}}.
    \label{eq:back_pass_SN_lam}
\end{equation}
However, this will have errors even under linearity assumption of derivatives as the term $(\vf_{\vx_t \vx_t} \delta \vx_{t} + \vf_{\vx_t \vu_{t}}\delta \vu_{t})\tran \bar{\blambda}_{t+1}$ should have been $(\vf_{\vx_t \vx_t} \delta \vx_{t} + \vf_{\vx_t \vu_{t}}\delta \vu_{t})\tran \vl_{t+1}$ for eq.~\eqref{eq:back_pass_SN_lam} to hold even in a linear sense. But, ``Stagewise Newton" makes this approximation.
So, the backward pass for ``Stagewise Newton" becomes 
\begin{align}
    \bv_{t} = Q_{\vx}^{t} - (Q_{\vu}^{t})\tran (Q_{\vu\vu}^{t})^{-1}Q_{\vu\vx}^{t}, \\
    \vS_{t} = Q_{\vx\vx}^{t} - Q_{\vx\vu}^{t}(Q_{\vu\vu}^{t})^{-1}Q_{\vu\vx}^{t},\\
    \bar{\blambda}_{t} = c_{\vx_t} + \vf_{\vx_t}\tran \bar{\blambda}_{t+1},
    \label{eq:SN_back_pass_lam}
\end{align}
where
\begin{align*}
    &Q_{\vx}^{t} = c_{\vx_{t}} + \vf_{\vx_t}\tran \bv_{t+1} \\
    &Q_{\vu}^{t} = c_{\vu_{t}} + \vf_{\vu_t}\tran \bv_{t+1}\\
    &Q_{\vx\vx}^{t} = c_{\vx_t\vx_t} + \vf_{\vx_t}\tran \vS_{t+1}\vf_{\vx_t} + \bar{\blambda}_{t+1}\otimes \vf_{\vx_t \vx_t}\\
    &Q_{\vu\vu}^{t} = c_{\vu_t\vu_t} + \vf_{\vu_t}\tran \vS_{t+1}\vf_{\vu_t} + \bar{\blambda}_{t+1} \otimes \vf_{\vu_t \vu_t}\\
    &Q_{\vu\vx}^{t} = c_{\vu_t\vx_t} + \vf_{\vu_t}\tran \vS_{t+1}\vf_{x_t} + \bar{\blambda}_{t+1} \otimes \vf_{\vu_t \vx_t}
\end{align*}
with the terminal conditions:
\begin{align*}
    \vS_{T} = C_{T,\vx\vx}, \quad \bv_{T} = C_{T,\vx}, \quad \text{and} \quad \bar{\blambda}_{T} = C_{T,\vx}. 
\end{align*}
We reiterate that the forward pass for Stagewise Newton updates $\delta \vu_{t} = \vk_{t} + \vK_{t} \delta \hat{\vx}_{t}$ where $\delta \hat{\vx}_{t} = \vf_{\vx_{t-1}}\delta \hat{\vx}_{t-1} + \vf_{\vu_{t-1}}\delta \vu_{t-1}$, is the linearized dynamics, unlike DDP, which takes the full $\delta \vx_{t} = \vf(\bar{\vx}_{t-1} + \delta \vx_{t-1},\bar{\vu}_{t-1}+\delta \vu_{t-1}) - \vf(\bar{\vx}_{t-1},\bar{\vu}_{t-1})$.
\begin{remark}
   In reference \cite{Pantoja_SN}, the author proposes the aforementioned stagewise Newton procedure to find the Newton direction for the optimal control problem posed in this paper. A careful look at these equations clearly shows that they are not the same as the equations we have for Newton LQR above. In particular, rather than the variables $\bar{\lambda}_t$ being the multipliers from the previous iteration, they are approximated to be linear as in eq. \eqref{eq:SN_back_pass_lam}. To begin with, this is not a good approximation as explained in the aforementioned section. Along with this, not using full perturbation in the forward pass, like DDP also leads to slower convergence \cite{shoemaker1983ddp}. 
\end{remark}

We describe the iLQR algorithm next. iLQR uses the modified QP as mentioned in eq. \eqref{M-QP}. For the optimal control problem, this means that the optimization problem in eq. \eqref{eq:per_optim} becomes
\begin{dmath}
    \min_{\delta \vx_t, \delta \vu_{t}, \delta \blambda_t} \sum_{t=0}^{T-1}\Big(c_{\vx_t}\delta \vx_{t} + c_{\vu_t}\delta \vu_{t} + \frac{1}{2}\delta \vx_{t}' c_{\vx_t\vx_t} \delta \vx_t + \delta \vx_t\tran c_{\vx_t\vu_t} \delta \vu_t + \frac{1}{2} \delta \vu_{t}\tran c_{\vu_t \vu_t} \delta \vu_t  + (\bar{\blambda}_{t+1} + \delta \blambda_{t+1})\tran (\vf(\bar{\vx}_t, \bar{\vu}_t) + \vf_{\vx_t}\delta \vx_t + \vf_{\vu_t} \delta \vu_t - 
    \bar{\vx}_{t+1}  -\delta \vx_{t+1})\Big ) + C_{T, \vx}\delta \vx_T + \frac{1}{2} \delta \vx_T\tran C_{T, \vx\vx} \delta \vx_T.
    \label{eq:per_optim_iLQR}
\end{dmath}
It has the same forward and backward pass as DDP except we drop the tensor products of second order terms in dynamics. This gives:

\begin{align*}
    &Q_{\vx}^{t} = c_{\vx_{t}} + \vf_{\vx_t}\tran\bv_{t+1} \\
    &Q_{\vu}^{t} = c_{\vu_{t}} + \vf_{u_t}\tran \bv_{t+1}\\
    &Q_{\vx\vx}^{t} = c_{\vx_t\vx_t} + \vf_{\vx_t}\tran \vS_{t+1}\vf_{\vx_t} \\
    &Q_{\vu\vu}^{t} = c_{\vu_t\vu_t} + \vf_{\vu_t}\tran \vS_{t+1}\vf_{\vu_t} \\
    &Q_{\vu\vx}^{t} = c_{\vx_t\vu_t} + \vf_{\vu_t}\tran \vS_{t+1}\vf_{\vx_t} 
\end{align*}

\begin{proposition}\label{prop:ilqr_descent}
    The solution to the iLQR problem is always a descent direction for the cost $c(x,u)$ and the next solution can be made feasible via a suitable line-search procedure. In contrast, it is not necessary that the solution to Newton LQR be a descent direction. \\
\end{proposition}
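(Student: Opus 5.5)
The plan is to reduce the statement to the first Proposition (the modified QP \eqref{M-QP} result). I will cast the optimal control problem in the generic form by stacking $\vz = (\vx_1,\dots,\vx_T,\vu_0,\dots,\vu_{T-1})$. The cost is $g(\vz)=\sum_t c(\vx_t,\vu_t)+C_T(\vx_T)$ and the constraint is $\vh(\vz) = (\vf(\vx_t,\vu_t)-\vx_{t+1})_{t=0}^{T-1}$.

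\textbf{Step 1: iLQR is exactly \eqref{M-QP}.} Problem \eqref{eq:per_optim_iLQR} is precisely \eqref{M-QP} for this choice of $g$ and $\vh$. Its objective contains only $g_\vz\delta\vz+\tfrac12\delta\vz\tran g_{\vz\vz}\delta\vz$, with no $\bar\blambda\otimes\vf_{\cdot\cdot}$ terms, and its constraint is the linearized dynamics \eqref{eq:lin_dyn}. Since the nominal trajectory is generated by a forward rollout, $\bar\vh=\bo$.

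\textbf{Step 2: the backward pass solves this QP.} I will show that the iLQR recursion (with the tensor terms dropped) solves this QP exactly. The argument mirrors the DDP derivation with $\bar\blambda$-weighted Hessians removed. By induction backward from $\vS_T=C_{T,\vx\vx}$, $\bv_T=C_{T,\vx}$, the cost-to-go of the QP at time $t$ is $\bv_t\tran\delta\vx_t+\tfrac12\delta\vx_t\tran\vS_t\delta\vx_t$ plus a constant. We need $\vS_t\succeq 0$ and $Q_{\vu\vu}^t\succ0$.

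This is where I expect the main obstacle. I will assume, as the paper does for $g_{\vz\vz}\succ0$, that the stage-cost Hessian $\begin{bmatrix}c_{\vx\vx}&c_{\vx\vu}\\ c_{\vu\vx}&c_{\vu\vu}\end{bmatrix}\succ0$ and $C_{T,\vx\vx}\succeq0$. Then $\begin{bmatrix}Q_{\vx\vx}&Q_{\vx\vu}\\ Q_{\vu\vx}&Q_{\vu\vu}\end{bmatrix}$ equals the stage Hessian plus $[\vf_\vx\ \vf_\vu]\tran\vS_{t+1}[\vf_\vx\ \vf_\vu]$, which is positive definite. Hence $Q_{\vu\vu}^t\succ0$, and the Schur complement $\vS_t\succeq0$, so the induction closes. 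If only $c_{\vu\vu}\succ0$ were assumed, I would argue directly that the reduced Hessian of $g$ on the null space of $\vh_\vz$ is positive definite. That suffices because the feasible set of the QP is parameterized by $\delta\vu$ alone.

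\textbf{Step 3: descent and feasibility.} Descent then follows in either of two ways. One is to invoke the first Proposition. The other is to read it directly from \eqref{eq:exp_cost_red} with the iLQR $Q$'s: the predicted change is $-(\alpha-\alpha^2/2)\sum_t (Q_\vu^t)\tran(Q_{\vu\vu}^t)^{-1}Q_\vu^t<0$ for $\alpha\in(0,1]$ unless all $Q_\vu^t=\bo$, i.e.\ unless the point is stationary. Equivalently, the directional derivative $g_\vz\delta\vz$ along the feasible direction equals $-\delta\vz\tran g_{\vz\vz}\delta\vz<0$, using $\vh_\vz\delta\vz=0$ and the QP optimality condition. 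For feasibility, I will note that iLQR's forward pass rolls out the true nonlinear dynamics, $\vx_{t+1}=\vf(\vx_t,\vu_t)$ with $\vu_t=\bar\vu_t+\alpha\vk_t+\vK_t\delta\vx_t$, so the new iterate is exactly feasible. The rolled-out trajectory differs from $\bar\vz+\alpha\delta\vz$ only by $O(\alpha^2)$ under Assumption~\ref{assum:smoothness}. Consequently, for $\alpha$ small the actual cost decrease matches the predicted first-order decrease, which gives an Armijo-type line search.

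\textbf{Step 4: the contrast for Newton LQR.} The Newton LQR Hessian includes $\bar\blambda_{t+1}\otimes\vf_{\cdot\cdot}$, and $\bar\blambda$ can be arbitrary away from an optimum. To show descent can fail, I will give a scalar counterexample: $T=1$, $f(x,u)=x+u^2$-type dynamics with $C_T$ linear, so that $Q_{uu}=c_{uu}+\bar\lambda f_{uu}<0$. Then the QP step is an ascent direction, or the QP is unbounded.
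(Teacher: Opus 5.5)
Your Step~2 is the paper's proof: backward induction from $\vS_T=C_{T,\vx\vx}$, with the iLQR $Q$-block written as the stage-cost Hessian plus $[\vf_\vx\ \vf_\vu]\tran\vS_{t+1}[\vf_\vx\ \vf_\vu]$. A Schur complement gives $Q_{\vu\vu}^t\succ0$ and $\vS_t\succeq0$, and descent is read off \eqref{eq:exp_cost_red}. Two points about the hypotheses need fixing.

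First, you assume the joint stage Hessian is strictly positive definite. The paper only assumes $c_{\vu\vu}\succ0$, joint stage Hessian $\succeq0$, and $C_{T,\vx\vx}\succeq0$, which also covers costs with no running state penalty. Your induction goes through verbatim under these weaker assumptions. A positive semidefinite block matrix with positive definite lower-right block has positive semidefinite Schur complement, and then $Q_{\vu\vu}^{t-1}=c_{\vu\vu}+\vf_\vu\tran\vS_t\vf_\vu\succ0$.

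Second, your fallback claim that $c_{\vu\vu}\succ0$ alone makes the reduced Hessian positive definite is false. Take scalar $x_{t+1}=x_t+u_t$, $c(x,u)=-10x^2+u^2$, $C_T\equiv0$, $T=2$. The reduced cost has $\partial^2 g/\partial u_0^2=2-20=-18$, and correspondingly iLQR gives $\vS_1=-20$ and $Q_{\vu\vu}^0=-18$. Joint semidefiniteness is essential. With it, the null-space argument does work: the first nonzero $\delta\vu_k$ has $\delta\vx_k=\bo$, so that stage contributes $\delta\vu_k\tran c_{\vu\vu}\delta\vu_k>0$ and every other term is nonnegative. Relatedly, invoking Proposition~1 literally requires $g_{\vz\vz}\succ0$, which fails whenever $C_{T,\vx\vx}$ is singular. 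Your identity $g_\vz\delta\vz=-\delta\vz\tran g_{\vz\vz}\delta\vz$, restricted to the null space of $\vh_\vz$, is the correct replacement.

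Beyond this, your Steps~3--4 prove parts of the statement that the paper's proof leaves informal. The paper stops once $Q_{\vu\vu}^t\succ0$ is established. Its feasibility clause rests only on the heuristic $\vh(\bar\vz+\alpha\delta\vz)\approx\bo$ from the discussion before Proposition~1. Its Newton LQR clause rests only on the later remark in Section~3.2 that $\blambda_T\otimes\vf_{\cdot\cdot}$ is generally indefinite.

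You observe that the nonlinear rollout is exactly feasible and differs from $\bar\vz+\alpha\delta\vz$ by $O(\alpha^2)$ under Assumption~\ref{assum:smoothness}. That turns the line-search clause into a genuine Armijo argument.

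Your scalar example is a valid witness that Newton LQR can fail to descend: $x_1=x_0+u_0^2$, $C_T(x)=ax$ with $a<-c_{\vu\vu}/2$. There $\bar\lambda_1=C_{T,\vx}$ is exact, so Newton LQR coincides with Newton's method on the reduced cost. At any non-stationary $\bar u_0$ the directional derivative is $-(Q_{\vu})^2/Q_{\vu\vu}>0$. That the reduced cost is unbounded below in this example does not matter for that purpose.
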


This makes the tensor products drop from the $Q_{\vx\vx}^{t}, Q_{\vu\vu}^{t}$ and $Q_{\vu\vx}^{t}$ and now the net cost decrease can be guaranteed to be positive definite under the following mild assumptions.
\begin{assumption}
    $c_{\vu_t\vu_t}$ $\succ 0$. $c_{\vu_t\vu_t}$ is always positive definite for all time-steps, $t$.
\end{assumption}
\begin{assumption}
    $C_{T,\vx\vx}$ $\succeq 0$. $C_{T,\vx\vx}$ is positive semi-definite.
\end{assumption}
\begin{assumption}
    $\begin{bmatrix}
        c_{\vx_t\vx_t} && c_{\vx_t\vu_t}\\
        c_{\vu_t\vx_t} && c_{\vu_t\vu_t}
    \end{bmatrix}$ $\succeq 0$. The hessian matrix of incremental cost, $c(\vx_t,\vu_t)$, is positive semidefinite for all time-steps, $t$.
\end{assumption}
\begin{proof}

 If we have $Q_{\vu\vu}^{t}$ to be positive definite for all time-steps then we are guaranteed a reduction in cost. At the terminal, we have
\begin{equation*}
     Q_{\vu\vu}^{T-1} = c_{\vu_{T-1}\vu_{T-1}} + \vf_{\vu_{T-1}}\tran C_{T,\vx\vx}\vf_{\vu_{T-1}}
\end{equation*}
which is positive definite following from assumptions 1 and 2. 
In addition to this, the hessian of iLQR is
\begin{equation}
    \begin{bmatrix}
        Q_{\vx\vx}^{T-1} & Q_{\vx\vu}^{T-1}\\
        Q_{\vu\vx}^{T-1} & Q_{\vu\vu}^{T-1}
    \end{bmatrix} = \begin{bmatrix}
        c_{\vx_{T-1}\vx_{T-1}} &  c_{\vx_{T-1}\vu_{T-1}}\\
         c_{\vu_{T-1}\vx_{T-1}} &  c_{\vu_{T-1}\vu_{T-1}} 
    \end{bmatrix} + \begin{bmatrix}
        \vf_{\vx_{T-1}}\\
        \vf_{\vu_{T-1}}
    \end{bmatrix}\tran \begin{bmatrix}
        C_{T,\vx\vx} & C_{T,\vx\vx}\\
        C_{T,\vx\vx} & C_{T,\vx\vx}
    \end{bmatrix} \begin{bmatrix}
        \vf_{\vx_{T-1}}\\
        \vf_{\vu_{T-1}}
    \end{bmatrix}.
    \label{eq:hess_matrix}
\end{equation}
In the above equation, it is evident that the matrix on left hand side is positive definite. This follows from assumptions 2 and 3, which imply:
\begin{align*}
    \begin{bmatrix}
        Q_{\vx\vx}^{T-1} & Q_{\vx\vu}^{T-1}\\
        Q_{\vu\vx}^{T-1} & Q_{\vu\vu}^{T-1}
    \end{bmatrix} &\succeq 0, \quad \text{since} \\
    \begin{bmatrix}
        c_{\vx_{T-1}\vx_{T-1}} &  c_{\vx_{T-1}\vu_{T-1}}\\
         c_{\vu_{T-1}\vx_{T-1}} &  c_{\vu_{T-1}\vu_{T-1}}
    \end{bmatrix} &\succeq 0 \quad \text{and} \quad
    \begin{bmatrix}
         C_{T,\vx\vx} & C_{T,\vx\vx}\\ 
        C_{T,\vx\vx} & C_{T,\vx\vx}
    \end{bmatrix} \succeq 0.
\end{align*}
In addition to this, $Q_{\vu\vu}^{T-1}$ is already shown to be positive definite. So, the Schur complement \cite{Gallier2019Schur} of $\begin{bmatrix}
        Q_{\vx\vx}^{T-1} & Q_{\vx\vu}^{T-1}\\
        Q_{\vu\vx}^{T-1} & Q_{\vu\vu}^{T-1}
    \end{bmatrix}$ has to be positive semidefinite, which gives
\begin{equation}
    Q_{\vx\vx}^{T-1} - Q_{\vx\vu}^{T-1}(Q_{\vu\vu}^{T-1})^{-1}Q_{\vu\vx} \succeq 0.
\end{equation}
But the above equation is nothing but the backward pass on $\vS$:
\begin{equation*}
    \vS_{T-1} =  Q_{\vx\vx}^{T-1} - Q_{\vx\vu}^{T-1}(Q_{\vu\vu}^{T-1})^{-1}Q_{\vu\vx}^{T-1} \succeq 0
\end{equation*}
Now, one can write
\begin{equation*}
    Q_{\vu\vu}^{T-2} = c_{\vu_{T-2}\vu_{T-2}} + \vf_{\vu_{T-2}}\tran \vS_{T-1}\vf_{\vu_{T-2}}
\end{equation*}
which is always positive definite as $c_{\vu_{T-1}\vu_{T-2}}$ is positive definite following from assumption 1 and $\vS_{T-1} \succeq 0$. By induction, it follows that $Q_{\vu\vu}^{t} \succ 0$ for all time-step, $t$.
\end{proof}

Thus, this implies that the iLQR guaranties a decent direction under these mild assumptions. In contrast, DDP often requires heavy regularization.





\section{Contrasting the algorithmic performance}\label{sec:algorithms}

While the preceding discussion outlined the theoretical aspects of iLQR, DDP, and Stagewise-Newton, it is equally important to understand how these algorithms behave in practice. In this section, we present a direct comparison of their performance on representative control tasks, explicitly focusing on the unregularized forms of both iLQR and DDP. By analyzing convergence rates, predicted versus actual cost reductions, and step sizes, we aim to reveal the practical advantages and shortcomings that arise solely from their respective local modeling choices, independent of any stabilization or damping heuristics. We consider six problems to show this comparison. We have pendulum and cartpole swing-up tasks. A hypersensitive problem \cite{Rao2000} with dynamics given by:
\begin{equation*}
    \dot{\vx} = -\vx^{3} + \vu
\end{equation*}
is considered and referred to as ``Test Problem-1". Another 1D test problem was taken from \cite{jacobson1970ddp}. The governing equation for this problem is
\begin{equation*}
    \dot{\vx} = -0.2\vx + 10\tanh(\vu).
\end{equation*}
This problem is labeled as ``Test Problem -2". 
We consider the incremental as well as the terminal cost to be quadratic for all the problems. The general form of cost is given by:
\begin{align*}
    J = \min_{\vu(t)} \frac{1}{2}\int_{0}^{t_f} (\vx-\vx_f)\tran \vQ(\vx - \vx_f) + \vu\tran \vR\vu ~~~dt \\
    + \frac{1}{2} (\vx_{t_f} - \vx_f)\tran \vQ_f (\vx_{t_f} - \vx_f)
\end{align*}
where the parameters of each problem is summarized in Table \ref{tab:sys_params}. The term $dt$ denotes the size of step used for Euler integration and $\Delta t$ denotes the length of zero order control hold used to solve the problem.

\begin{table*}[!htpb]
    \centering
    \tiny
    \begin{tabular}{|c|c|c|c|c|c|c|c|c|c|}
        \hline
        System & $ t_f$(s) & dt(s) & $\Delta t$(s) & T & $\vQ$ & $\vR$ & $\vQ_f$ & $\vx_{0}$ & $\vx_{f}$\\
        \hline
        Pendulum    & 5      & $10^{-3}$ &  0.1 & 50 & 3$I_{2\times 2}$ &  3 & 30$I_{2\times 2}$  & $[0, 0]'$ & $[\pi,0]'$     \\
        \hline
        Cartpole    & 3      & $10^{-3}$ &  0.1 & 30 & 10$I_{4\times 4}$ & 0.001 & 10000$I_{4\times 4}$ &  $[0,0,\pi,0]'$ & $[0,0,0,0]'$  \\
        \hline
        TP-1    & 25      & $10^{-5}$ &  0.01 & 2500 & 2 & 2 & $10^{10}$ & 1 & 1.5    \\
        \hline
        TP-2 & 0.5 & $10^{-4}$ &  0.01 & 50 & 20 & 2 & 20 & 5 & 0\\
        \hline
    \end{tabular}
    \caption{Parameters for all four problems. TP-1 denotes test problem 1 and TP-2 denotes test problem 2.}
    \label{tab:sys_params}
\end{table*}
In addition to these problems, we also borrowed two problems from \cite{LiaoShoemaker1992}. We label the first problem as ``Test-Problem 3." It is described as:
\begin{align*}
    \min \mathcal{J} =& \sum_{t=1}^{T-1}\left( \sum_{i=1}^{n} \left( (\vx_{t})_{i} + \frac{1}{4} \right)^{4} + \sum_{j=1}^{m} \left( (\vu_{t})_{j} + \frac{1}{4} \right)^{4} \right) + \sum_{i=1}^{n} \left( (\vx_{T})_{i} + \frac{1}{4} \right)^{4},\\
    &\text{where}~~ \vx_{t+1} = \mathbf{A}\vx_{t} + \mathbf{B}\vu_{t} + (\vx_{t}\tran \mathbf{C} \vu_{t}) \boldsymbol{\gamma} \quad t = 1,2,\cdots T-1\\
    &\vx_{1} = \boldsymbol{0}_{n\times 1} \in \mathbb{R}^{n}\\
    &\mathbf{A} \in \mathbb{R}^{n \times n}~~\text{and}~~(\mathbf{A})_{ij} = \begin{cases}
        0.5 \quad \text{if}~i=j\\
        0.25 \quad \text{if}~j=i+1\\
        -0.25 \quad \text{if}~j=i-1\\
        0\quad \text{elsewhere}
    \end{cases}~~i=1,\cdots n\\
    & \mathbf{B} \in \mathbb{R}^{n \times m}, ~~(\mathbf{B})_{ij} = \frac{i-j}{m+n},~~i=1,\cdots n ~~\text{and}~~j= 1,\cdots m\\
    & \mathbf{C} \in \mathbb{R}^{n \times m}, ~~(\mathbf{C})_{ij} = \mu\frac{(i+j)}{m+n},~\mu~\text{is a scalar},\\
    & \boldsymbol{\gamma} \in \mathbb{R}^{n}~\text{ and }~ \boldsymbol{\gamma} = \boldsymbol{1}_{n \times 1}.
\end{align*}
In the problem described above, $(\cdot)_{ij}$ denote the element of a matrix at $i$-th row and $j$-th column.

The final problem is labeled ``Test-Problem 4" and is described below:
\begin{align*}
    \min \mathcal{J} =& \sum_{t=1}^{T-1} || \vx_{t} ||_{2}^{2} \left( \sin \left( \frac{||\vu_{t}||_{2}^{2}}{m} \right) + 1 \right)  + || \vx_{T}||_{2}^{2}\\
    &\text{where}~(\mathbf{x}_{t+1})_{i} = \sin \left( (\mathbf{x}_{t})_{i} \right) + \left( \mathbf{F} \mathbf{W}(\mathbf{u}_{t}) \right)_{i}\\
    &(\cdot)_{i}~\text{denotes the }~i-\text{th component of the vector}~(\cdot)\\
    &(\mathbf{x}_{1})_{i} = \frac{i}{2n}\quad i = 1,\cdots n\\
    &(\mathbf{F})_{ij} = \frac{i+j}{2n}\quad i = 1, \cdots n~~\text{and}~~j=1,\cdots m\\
    &\mathbf{W}(\mathbf{u}_{t}) = \left( \sin\left(\left(\mathbf{u}_{t}\right)_{1}\right),\cdots,\sin\left(\left(\mathbf{u}_{t}\right)_{m}\right)\right)\tran \in \mathbb{R}^{m}\\
    &\vx_{t} \in \mathbb{R}^{n} \quad \text{and} \quad \vu_{t} \in \mathbb{R}^{m}.
\end{align*}


\subsection{The impact of Forward-Pass}

A critical distinction between the Stagewise-Newton method and the DDP/iLQR family of algorithms lies in the forward pass, specifically in how state perturbations are propagated to compute the control update. In DDP and iLQR, the feedback term depends on the full nonlinear state perturbation, $\delta \vx_t$. The control update is
$
\delta \vu_t = \alpha \vk_t + \vK_{t} \delta \vx_t
$,
where $\delta \vx_t$ is the difference between the new and previous (nominal) trajectories:
\begin{align*}
    \delta \vx_t = \vf(\vx_{t}, \vu_{t}) - \vf(\bar{\vx}_{t}, \bar{\vu}_{t}).
\end{align*}
In contrast, the Stagewise-Newton method employs a control update, $\delta \vu_t = \alpha \vk_t + \vK_{t} \delta \hat{\vx}_t$, that relies on a linearized propagation of the state perturbation, $\delta \hat{\vx}_t$:
\begin{align*}
    \delta \hat{\vx}_{t+1} = \vf_{\vx_t}\delta \hat{\vx}_{t} + \vf_{\vu_t} \delta \vu_{t},\\
    \text{where}~~\delta \hat{\vx}_{t} = \vf_{\vx_{t-1}}\delta \hat{\vx}_{t-1} + \vf_{\vu_{t-1}} \delta \vu_{t-1}.
\end{align*}
The performance implications of this choice were highlighted by Liao and Shoemaker \cite{LiaoShoemaker1992}, who observed that the linearized forward pass of the Stagewise-Newton method leads to significantly slower convergence compared to DDP. To isolate the effect of the forward pass, they introduced a ``Mixed" algorithm combining a DDP-style forward pass with a Stagewise-Newton backward pass, confirming that using the full nonlinear state perturbation, $\delta \vx_t$, accelerates convergence.

To analyze this difference quantitatively, we examine the Taylor series expansion of the state perturbation. Since the initial state is fixed, $\delta \vx_0 = \boldsymbol{0}$, the first control perturbation is purely feedforward: $\delta \vu_0 = \alpha \vk_0$. The resulting state perturbation at the next step, $\delta \vx_1$, can be expanded as:
\begin{dmath*}
    \delta \vx_{1} = \vf_{\vx_{0}}\delta \vx_{0} + \vf_{\vu_{0}}\delta \vu_{0} + \frac{1}{2} \delta \vx_{0}\tran \vf_{\vx_{0}\vx_{0}} \delta \vx_{0}+ \delta \vu_{0}\tran \vf_{\vu_{0}\vx_{0}} \delta \vx_{0}+ \frac{1}{2} \delta \vu_{0}\tran \vf_{\vu_{0}\vu_{0}} \delta \vu_{0} + \alpha^{3}(\text{third order terms}) + \alpha^{4}(\text{fourth order terms}) + \cdots \cdots
\end{dmath*}
Substituting the initial conditions yields an expansion in powers of the line search parameter $\alpha$:
\begin{dmath*}
    \delta \vx_{1} = \alpha \vf_{\vu_{0}} \vk_{0} + \frac{1}{2} \alpha^{2} \vk_{0}\tran \vf_{\vu_{0}\vu_{0}} \vk_{0} + + \alpha^{3}(\text{third order terms}) + \alpha^{4}(\text{fourth order terms}) + \cdots \cdots
\end{dmath*}
This recursive structure propagates through time. The state perturbation $\delta \vx_t$ at any time step $t$ is an expansion in powers of $\alpha$, where the coefficients depend on the system Jacobians and the feedforward control modifications $\{\vk_i\}_{i=0}^{t-1}$. The general form of the first-order term in $\alpha$ is:
\begin{dmath*}
    \delta \vx_{t} = \alpha \left[ \vf_{\vu_{t-1}}\vk_{t-1} + \sum_{j=0}^{t-2} \left( \prod_{i=j+1}^{t-1} (\vf_{\vx_{i}} + \vf_{\vu_{i}}\vK_{i}) \right) \vf_{\vu_{j}}\vk_{j} \right] +  \alpha^{2}(\text{second order terms}) + \alpha^{3}(\text{third order terms}) + \cdots \cdots
\end{dmath*}

\begin{remark}
In the perturbation expansion of the dynamics, each $k$-th order term is scaled by a factor of $\alpha^k$. Consequently, one can diminish the influence of higher-order terms and recover the linearized dynamics by reducing $\alpha$.
\end{remark}

The Stagewise-Newton method, by design, retains only the first-order terms in $\alpha$ from this expansion for its feedback correction. This corresponds to an implicit assumption that higher-order dynamics are negligible, which can be a poor approximation. In contrast, DDP and iLQR use a line search on $\alpha$ to actively manage the influence of these higher-order terms, reducing $\alpha$ only when necessary to ensure a sufficient decrease in the cost function. This adaptive approach avoids prematurely discarding valuable nonlinear information.

This theoretical difference is demonstrated empirically in Fig.~\ref{fig:alpha_comp_SN} for the pendulum swing-up task. The figure compares four methods of propagating the state perturbation $\delta\theta$: 
1) the ground truth (full nonlinear difference), 2) a local linear approximation, 3) a local quadratic approximation, and 4) the recursive linear propagation used in Stagewise-Newton. For a given iteration, the local linear and quadratic models are nearly indistinguishable from the ground truth, indicating the dynamics are well-behaved locally. However, the Stagewise-Newton propagation (``Linear Prop.") significantly deviates from the ground truth, especially for larger $\alpha$. This error reduces as $\alpha$ is reduced (e.g., $\alpha=0.01$), confirming that the method's validity is restricted to small step sizes.

These findings on the forward pass directly correlate with convergence speed, as shown in Table~\ref{tab:iterations_to_optimal}. The data, including results for DDP and Newton from \cite{LiaoShoemaker1992}, demonstrates that both iLQR and DDP significantly outperform the Newton method due to their handling of nonlinearity. Notably, iLQR converges faster than DDP in this instance. The reasons for this, related to the backward pass and regularization, will be explored in the next section.

\begin{figure}[!htbp]
    \centering
    \subfloat[$\alpha=1$]{\includegraphics[width=0.48\linewidth]{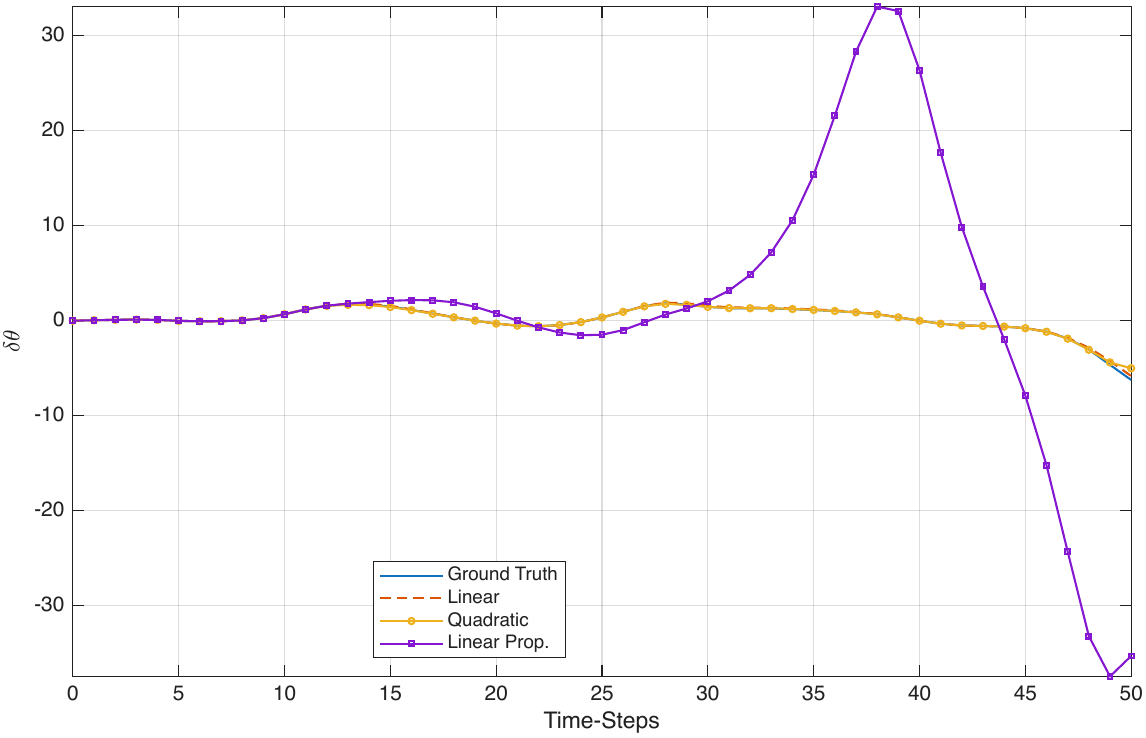}\label{subfig:alpha1}}
   \hfill
   \subfloat[$\alpha=0.1$]{\includegraphics[width=0.48\linewidth]{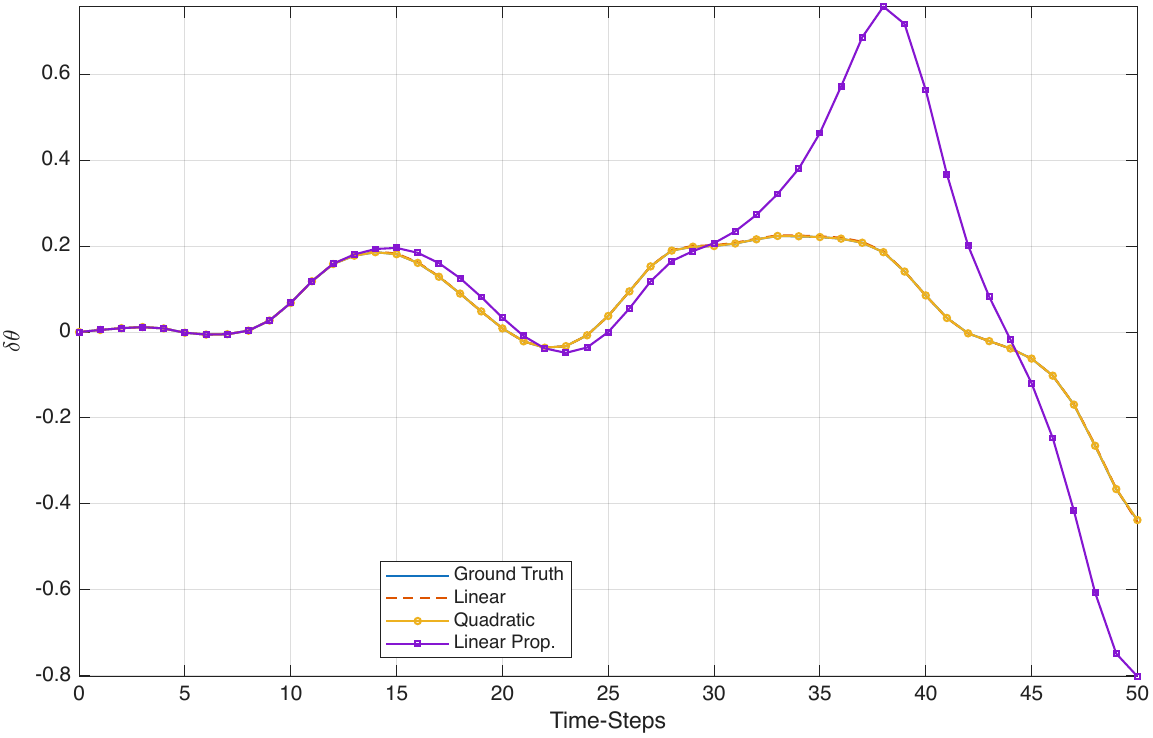}\label{subfig:alpha01}}%
   \vfill
   \subfloat[$\alpha=0.05$]{\includegraphics[width=0.48\linewidth]{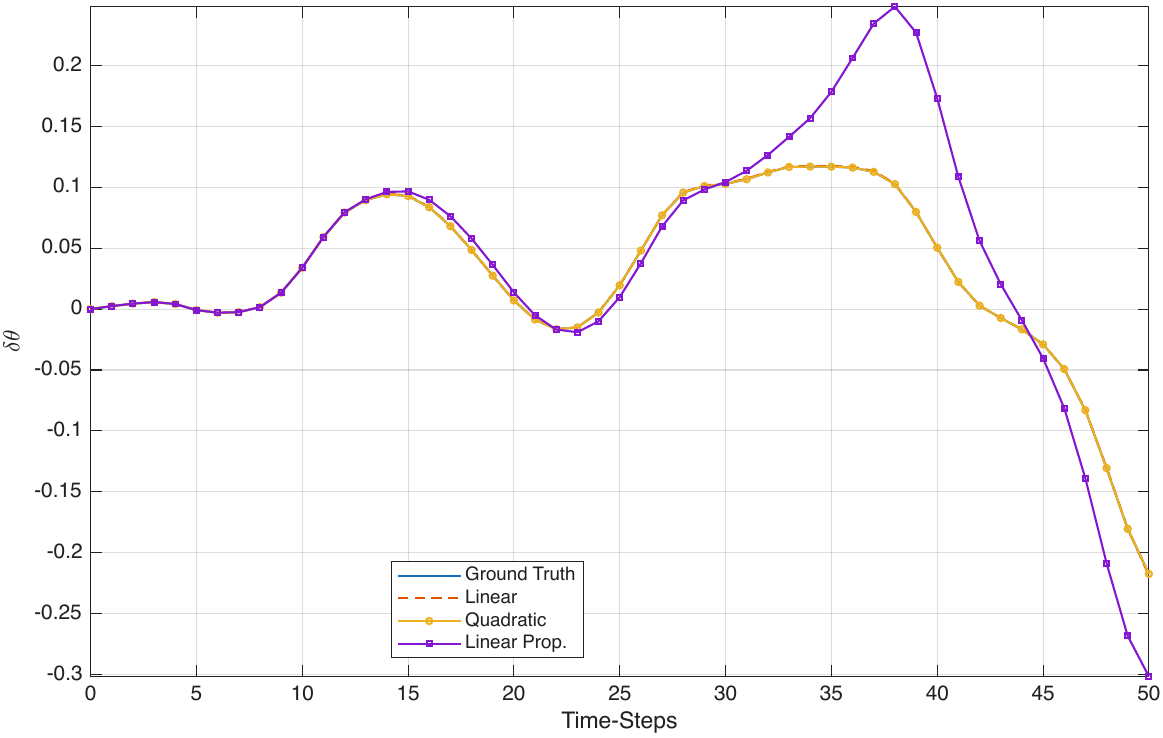}\label{subfig:alpha005}}
   \hfill
   \subfloat[$\alpha=0.01$]{\includegraphics[width=0.48\linewidth]{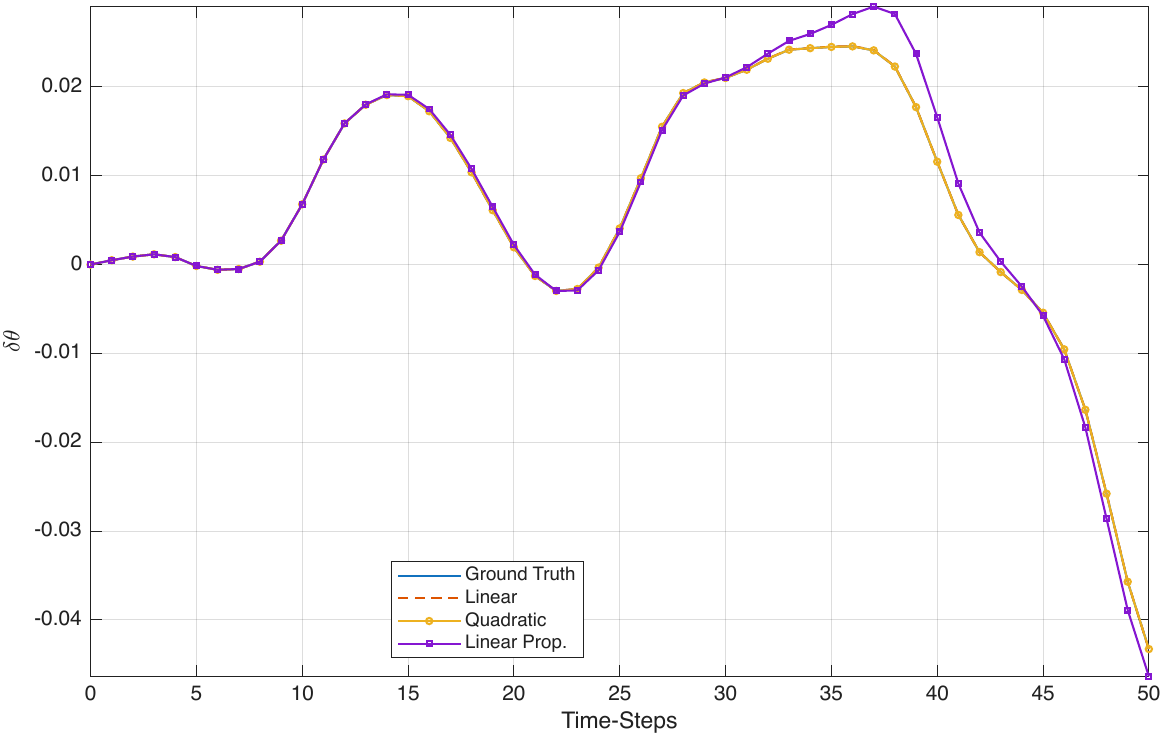}\label{subfig:alpha001}}
    \caption{Propagation of the state perturbation, $\delta\theta$, for the pendulum task. The ground truth (solid line) is compared with predictions from a local linear model (dashed), a local quadratic model (circles), and the recursive Stagewise-Newton update (squares). The Stagewise-Newton update only converges to the ground truth for very small line search parameters ($\alpha \ll 1$), whereas the local models remain accurate, highlighting the importance of using the full state perturbation in the forward pass.}
    \label{fig:alpha_comp_SN}
\end{figure}

\begin{table}[!htbp]
    \centering
    \caption{Number of iterations to convergence for Test Problem 4. $T=100$, $n=100$, and $m=10$.}
    \label{tab:iterations_to_optimal}
    \begin{tabular}{|l|c|c|c|}
        \hline
        & \multicolumn{3}{c|}{Number of Iterations} \\
        \cline{2-4}
        Starting Point\textsuperscript{a} & iLQR & DDP\textsuperscript{b} & Newton\textsuperscript{b}  \\
        \hline
        1 & 4 & 8 & 132\\
        2 & 4 & 9 & 127\\
        3 & 5 & 10 & 128\\
        4 & 4 & 10 & 134\\
        5 & 5 & 10 & 134\\
        \hline
        \multicolumn{4}{l}{\textsuperscript{a}\footnotesize{Initial control guesses are defined in Table~\ref{tab:starting_points}.}} \\
        \multicolumn{4}{l}{\textsuperscript{b}\footnotesize{Data for DDP and Newton methods from \cite{LiaoShoemaker1992}.}}
    \end{tabular}
\end{table}

\begin{table}[htbp]
    \centering
    \caption{Initial control trajectories for Test Problem 4 ($T=100, m=10$).}
    \label{tab:starting_points}
    \begin{tabular}{|c|l|}
        \hline
        \textbf{Starting Point} & \textbf{Initial Control}, $\vu_t$ \\
        \hline
        1 & $\vu_{t} = \boldsymbol{0}$\\
        \hline
        2 & $\vu_{t} = 0.01 \cdot \boldsymbol{1} $\\
        \hline
        3 & $\vu_t = -0.01 \cdot \boldsymbol{1} $ \\
        \hline
        4 &
        $\begin{cases}
          \vu_{t} = 0.01 \cdot \boldsymbol{1} & t \text{ is odd} \\
          \vu_{t} = \boldsymbol{0}  & t \text{ is even}
        \end{cases}
        $ \\
        \hline
        5 &
        $\begin{cases}
          \vu_{t} = -0.01 \cdot \boldsymbol{1} & t \text{ is odd} \\
          \vu_{t} = \boldsymbol{0}   & t \text{ is even}
        \end{cases}
        $ \\
        \hline
    \end{tabular}
\end{table}

\subsection{The need for regularization in DDP}
In this subsection, we examine the descent directions predicted by iLQR and DDP. In other words, we present the expected reduction in cost predicted at different time steps of iLQR and DDP. The net ``expected cost reduction" in these algorithms is given by eq.~\eqref{eq:exp_cost_red}. It should be noted that the term $Q_{\vu\vu}^{t}$ should be positive definite to guarantee a certain reduction in cost. From Proposition \ref{prop:ilqr_descent}, it can stated that under some mild assumptions, this term is always positive definite for iLQR as described in the previous section. However, it may become negative-definite/indefinite for DDP. If we do a similar analysis like iLQR ( (eq.~\eqref{eq:hess_matrix} in Proposition \ref{prop:ilqr_descent}) for DDP, we get
\begin{align*}
    \begin{bmatrix}
        Q_{\vx\vx}^{T-1} & Q_{\vx\vu}^{T-1}\\
        Q_{\vu\vx}^{T-1} & Q_{\vu\vu}^{T-1}
    \end{bmatrix} = 
    \begin{bmatrix}
        c_{\vx_{T-1}\vx_{T-1}} &  c_{\vx_{T-1}\vu_{T-1}}\\
         c_{\vu_{T-1}\vx_{T-1}} &  c_{\vu_{T-1}\vu_{T-1}} 
    \end{bmatrix} +& \begin{bmatrix}
        \vf_{\vx_{T-1}}\\
        \vf_{\vu_{T-1}}
    \end{bmatrix}\tran \begin{bmatrix}
        C_{T,\vx\vx} & C_{T,\vx\vx}\\
        C_{T,\vx\vx} & C_{T,\vx\vx}
    \end{bmatrix} \begin{bmatrix}
        \vf_{\vx_{T-1}}\\
        \vf_{\vu_{T-1}}
    \end{bmatrix}\\
    &+ \blambda_{T} \otimes 
    \begin{bmatrix}
        \vf_{\vx_{T-1}\vx_{T-1}} & \vf_{\vx_{T-1}\vu_{T-1}}\\
        \vf_{\vu_{T-1}\vx_{T-1}} & \vf_{\vu_{T-1}\vu_{T-1}}
    \end{bmatrix}.
\end{align*}
The guarantee of a descent direction is lost due to the final term. Unlike the other components, this term involving the dynamics' Hessian is generally indefinite and depends on the specific trajectory. Its presence prevents any guarantee on the positive definiteness of $\mathbf{Q}_{\vu\vu}^t$ without imposing strong, and often impractical, assumptions on the system dynamics. Therefore, DDP must employ heavy regularization to enforce this condition and ensure a valid descent direction.

\begin{figure}[!htbp]
    \centering
    \subfloat[Cart-pole swing-up.\label{subfig:cartpole}]{\includegraphics[width=0.48\linewidth]{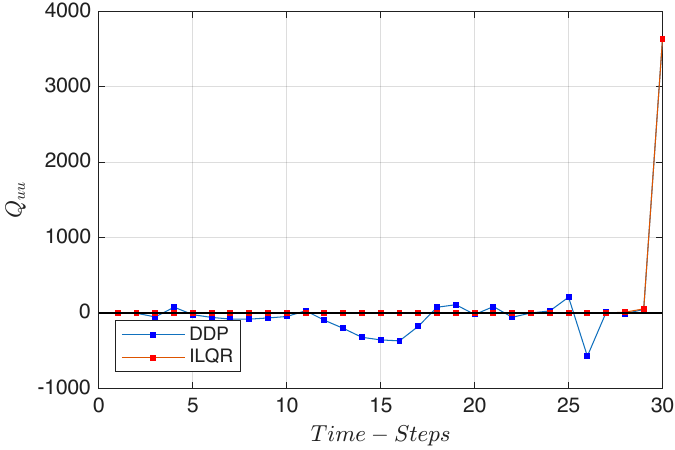}}
   \hfill
   \subfloat[Pendulum swing-up.\label{subfig:pendulum}] {\includegraphics[width=0.48\linewidth]{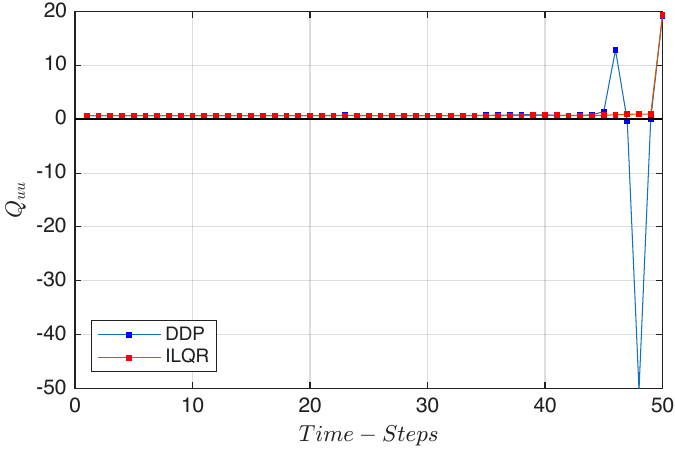}}%
   \hfill
   \subfloat[Test Problem-1.\label{subfig:TP1}] {\includegraphics[width=0.48\linewidth]{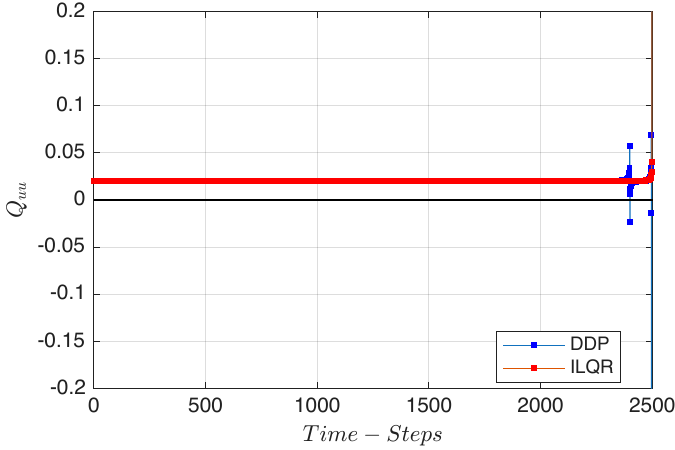}}
   \hfill
   \subfloat[Test Problem-2.\label{subfig:TP2}] {\includegraphics[width=0.48\linewidth]{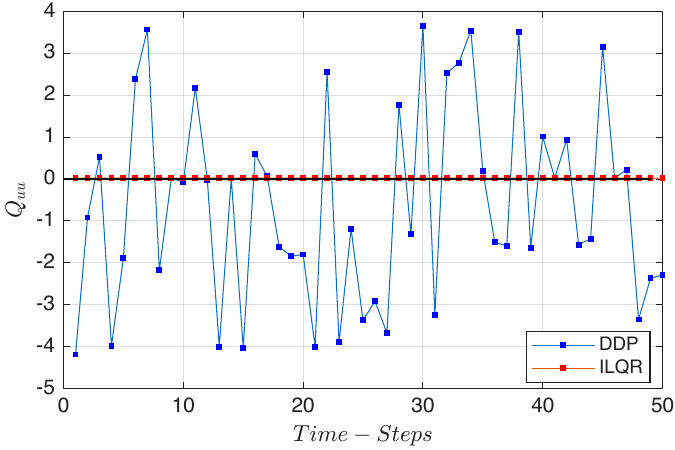}}
    \caption{The plot of $Q_{\vu\vu}$ for all the problems for the first iteration starting with random initial guess.}
    \label{fig:Quu_pd}
\end{figure}

Figure \ref{fig:Quu_pd} illustrates that $Q_{\vu\vu}$ remains positive in all the problems for iLQR. Since there is only a single control input, $Q_{\vu\vu}$ is scalar in all these cases. However, in DDP, it becomes negative at certain points, which can lead to an increase in cost at some time steps. This mixture of positive and negative values for $Q_{\vu\vu}$ across the trajectory necessitates strong regularization, as the predicted change in cost may otherwise increase. For the pendulum case shown in Fig.~\ref{subfig:pendulum}, the predicted cost variations are $\Delta J_{\text{DDP}} = -224.0154$ and $\Delta J_{\text{iLQR}} = -272.9982$. Both methods predict a cost decrease, with iLQR predicting a better decrease in this unregularized setting since $Q_{\vu\vu}$ remains positive throughout. In contrast, Fig.~\ref{subfig:cartpole} presents the $Q_{\vu\vu}$ profile for the cart-pole. Here, DDP yields $\Delta J_{\text{DDP}} = 3.0172 \times 10^7$, indicating a cost increase, while iLQR predicts $\Delta J_{\text{iLQR}} = -5.5692 \times 10^5$, reflecting a cost reduction. For Test Problem-1, $\Delta J_{\text{DDP}} = -4.2010\times 10^7$ and $\Delta J_{\text{iLQR}} = -4.3949 \times 10^7$. For Test Problem-2, $\Delta J_{\text{DDP}} = -120.19$ and $\Delta J_{\text{iLQR}} = -403.24$.

To compare the performance of the algorithms under regularization, we analyze their application to ``Test Problem-4'', a benchmark known for its high degree of nonlinearity. The convergence results are summarized in Table~\ref{tab:iterations_to_optimal}. The challenging nature of this problem necessitated the use of regularization to ensure convergence for both DDP and iLQR from all tested initial conditions.

A significant distinction emerged in the regularization strategy required by each algorithm. For iLQR, a minimal, non-problem-specific regularization was sufficient for robust convergence. We employed a simple Levenberg-Marquardt style modification, where $\mathbf{Q}_{\vu\vu}$ is shifted by $\beta\mathbf{I}$ only if it is not positive definite, with $\beta$ chosen to ensure positive definiteness. In contrast, DDP required the more specialized regularization (``Shift schemes'') described by Liao and Shoemaker~\cite{LiaoShoemaker1992} to achieve stability.

Despite the simplicity of its regularization, iLQR demonstrated superior performance, consistently converging in 4--5 iterations across all starting points. DDP, even with its tailored regularization scheme, required 8--10 iterations. This result highlights a key practical advantage of iLQR: its inherent stability, derived from the Gauss-Newton approximation, allows it to achieve rapid and robust convergence with minimal intervention, whereas DDP's reliance on the full Hessian often demands more complex regularization strategies on challenging problems.

In addition to this, ``Test Problem-3" satisfies all the assumptions for iLQR to guarantee a descent direction. To analyze this, we compare iLQR, DDP and Stagewise-Newton for this problem in Table~\ref{tab:test_problem_3_results_alt}. iLQR converged in fewer iterations compared to DDP or Stagewise-Newton without needing any regulalrization.

\begin{table}[htbp]
    \centering
    \caption{Numerical results for Test Problem-3 with $n=100$, $m=50$, $T=20$, and $\mu = 1/20$.}
    \label{tab:test_problem_3_results_alt}
    \begin{tabular}{|l|c|c|c|}
        \hline
        & iLQR & DDP\textsuperscript{b} & Newton\textsuperscript{b} \\
        \hline
        \# of iter.         & 6        & 9       & 14 \\
        optimal obj. value($\mathcal{J}^{*}$)  & 58.32139 & 58.32138 & 58.32139 \\
        Regularization     & No    & Yes    & Yes \\
        \hline
        \multicolumn{4}{l}{\textsuperscript{b}\footnotesize{Data for DDP and Newton methods from \cite{LiaoShoemaker1992}.}}
    \end{tabular}
\end{table}

\subsection{Cooling of the line-search parameter}
Building on the previous observation that iLQR converges more efficiently than DDP, this section investigates a key contributing factor: the quality of the descent direction computed by DDP. Far from the optimal trajectory, the inclusion of the full dynamics Hessian can degrade the DDP descent direction. This degradation has two primary consequences. First, it provides an explanation for the superior convergence speed of iLQR observed previously, even in regularized settings. Second, it often forces the line search to select conservatively small step sizes, thereby hindering overall progress.

The observed slower convergence of DDP can be traced to a methodological inconsistency within the algorithm. The backward pass minimizes a quadratic cost model, but to achieve local quadratic convergence, it asymmetrically applies a second-order dynamic model to the linear value term $\bv_t$ and a first-order model to the quadratic term $\vS_t$ \cite{shoemaker1983ddp}. This creates a descent direction based on a composite, approximated state perturbation. However, the forward pass uses the true nonlinear perturbation. This discrepancy between the model used to generate the search direction and the model used to evaluate it can degrade the direction's quality. As a result, the algorithm's progress is hindered, manifesting as both slower overall convergence and a frequent need to reduce the line search parameter $\alpha$.
The quadratic convergence can be written mathematically as:
\begin{align}
    \left|\left| \{ \vu_{t}^{\text{iter}+1}\}_{t=0}^{T-1} -\{ \vu_{t}^{*}\}_{t=0}^{T-1}\right|\right|_{2} \leq c \left|\left| \{ \vu_{t}^{\text{iter}}\}_{t=0}^{T-1} -\{ \vu_{t}^{*}\}_{t=0}^{T-1}\right|\right|_{2}^{2},
    \label{eq:con_map}
\end{align}
where iteration denotes the current iteration number, $c$ is a constant and $\{ \vu_{t}^{*}\}_{t=0}^{T-1}$ denotes the optimal control trajectory.

In this subsection, we show that the corrupt descent direction in DDP leads to reduction in line search parameter and is one of the reasons for small values which makes $c$ in eq.~\eqref{eq:con_map}. In this work, the line search parameter, $\alpha$ is reduced, if the ratio $\frac{\Delta J_{\text{actual}}}{\Delta J_{\text{pred}}}<0$. It is a common belief that DDP accepts larger steps than iLQR; owing to its quadratic convergence. Equivalently, for a given decrease in cost, the line-search parameter~$\alpha$ is often higher under DDP than under iLQR, motivated by the idea that DDP forms a better local model by including a second-order expansion of the dynamics. In this section, we show empirical evidence that DDP can lead to excessive cooling of the line-search parameter, $\alpha$, and hence, can take a much smaller step than iLQR.

\begin{figure}[!htbp]
    \centering
    \subfloat[Learning rate vs no. of iterations.\label{subfig:alpha_pendulum}]{\includegraphics[width=.48\linewidth]{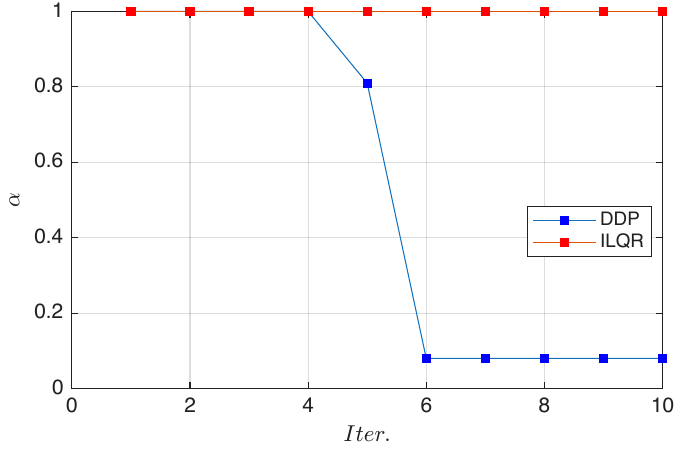}}
   \hfill
   \subfloat[Cost vs no. of iterations.\label{subfig:cost_pendulum}] {\includegraphics[width=0.48\linewidth]{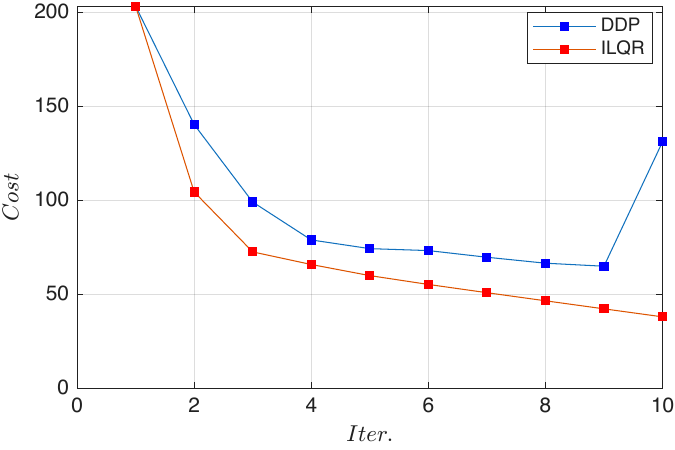}}%
    \caption{The plot of learning rate $\alpha$ and cost vs iteration for the pendulum swing-up task starting with random initial guesses. DDP cools down $\alpha$ as the cost predictions are corrupted.}
    \label{fig:cool_pendulum}
\end{figure}

\begin{figure}[!htbp]
    \centering
    \subfloat[Learning rate vs no. of iterations.\label{subfig:alpha_cartpole}]{\includegraphics[width=.48\linewidth]{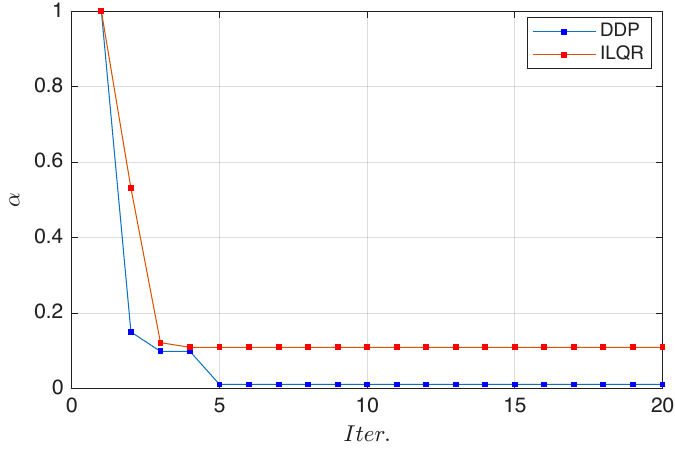}}
   \hfill
   \subfloat[Cost vs no. of iterations.\label{subfig:cost_cartpole}] {\includegraphics[width=0.48\linewidth]{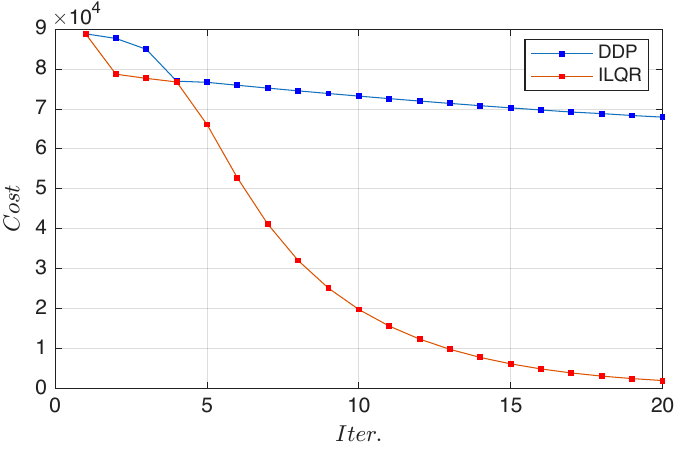}}%
    \caption{The plot of learning rate $\alpha$ and cost vs iteration for the cart-pole problem starting with random initial guesses. DDP cools down $\alpha$ as the cost change predictions are corrupted.}
    \label{fig:cool_cartpole}
\end{figure}

\begin{figure}[!htbp]
    \centering
    \subfloat[Learning rate vs no. of iterations.\label{subfig:alpha_TP1}]{\includegraphics[width=.48\linewidth]{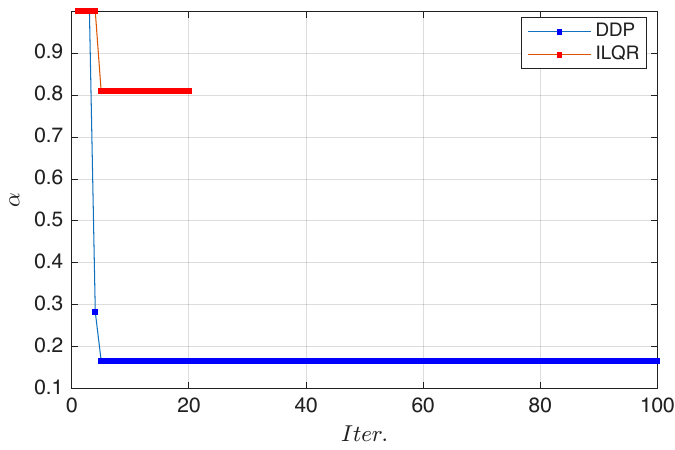}}
   \hfill
   \subfloat[Cost vs no. of iterations.\label{subfig:cost_TP1}] {\includegraphics[width=0.48\linewidth]{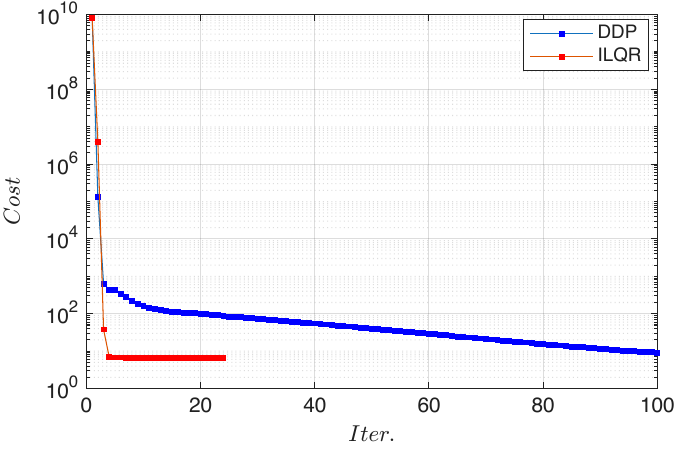}}%
    \caption{The plot of learning rate $\alpha$ and cost vs iteration for Test Problem-1 starting with random initial guess. iLQR converges in 24 iterations.}
    \label{fig:cool_TP1}
\end{figure}

\begin{figure}[!htbp]
    \centering
    \subfloat[Learning rate vs no. of iterations.\label{subfig:alpha_TP2}]{\includegraphics[width=.48\linewidth]{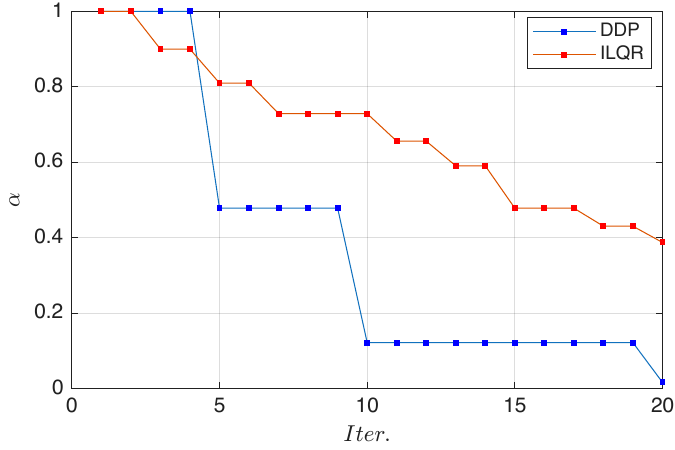}}
   \hfill
   \subfloat[Cost vs no. of iterations.\label{subfig:cost_TP2}] {\includegraphics[width=0.48\linewidth]{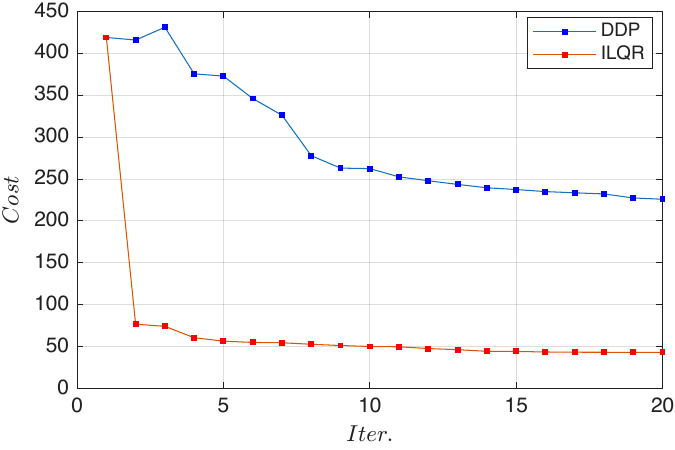}}%
    \caption{The plot of learning rate $\alpha$ and cost vs iteration for Test Problem-2 starting with random initial guess. DDP chooses bigger $\alpha$ for iterations 4 and 5 but cools down $\alpha$ below iLQR eventually.}
    \label{fig:cool_TP2}
\end{figure}

iLQR ties both prediction and realization to a single (first-order) state-propagation model, which bounds the necessary backtracking. DDP, by mixing linear and second-order information introduces additional $\alpha$-sensitivity that can cause excessive cooling of the line-search parameter. Thus, reducing the constant $c$ of the contraction map in eq.~\eqref{eq:con_map}.  

Figures \ref{fig:cool_pendulum}, \ref{fig:cool_cartpole}, \ref{fig:cool_TP1}, and \ref{fig:cool_TP2} contrast the variation of cost and learning rate, $\alpha$, for iLQR and DDP over successive iterations. As seen in Fig.~\ref{fig:cool_pendulum},  for the pendulum swing up, the learning rate remains fixed at $1$ for iLQR, while for DDP it drops to smaller values after iteration 4. Even during the first four iterations, when $\alpha = 1$ for both methods, iLQR achieves lower costs than DDP. In addition, DDP exhibits a noticeable cost increase from iteration 9 to 10, demanding a need for regularization. Likewise, for cartpole swing up, in Fig.~\ref{fig:cool_cartpole}, the learning rate for DDP decreases to nearly zero after a few iterations, resulting in a very slow cost reduction in subsequent steps. By contrast, iLQR maintains a learning rate of about $0.1$ and reduces the cost more effectively.

\begin{figure}[!htbp]
    \centering
    \subfloat[Pendulum swing-up.\label{subfig:comp_pendulum}]{\includegraphics[width=.48\linewidth]{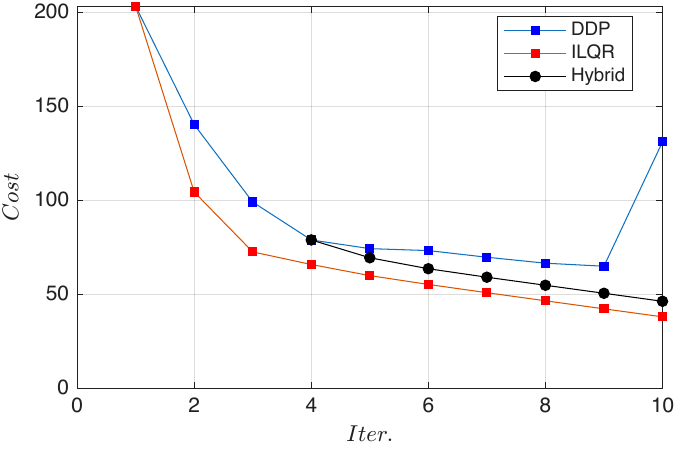}}
   \hfill
   \subfloat[Cart-pole swing-up.\label{subfig:comp_cartpole}] {\includegraphics[width=0.48\linewidth]{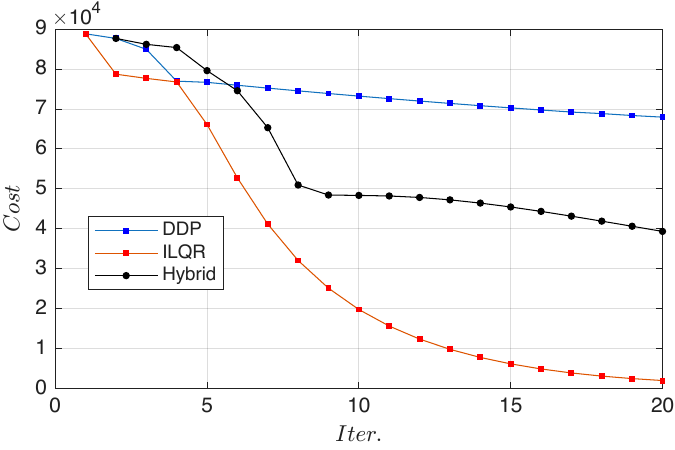}}%
   \hfill
   \subfloat[Test Problem-1.\label{subfig:comp_TP1}] {\includegraphics[width=0.48\linewidth]{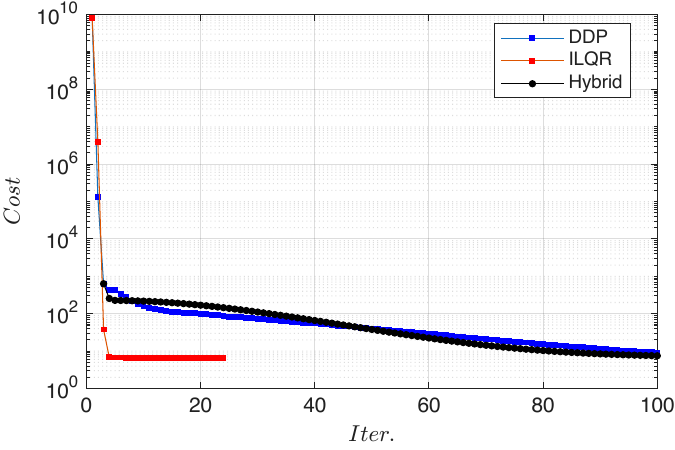}}
   \hfill
   \subfloat[Test Problem-2.\label{subfig:comp_TP2}] {\includegraphics[width=0.48\linewidth]{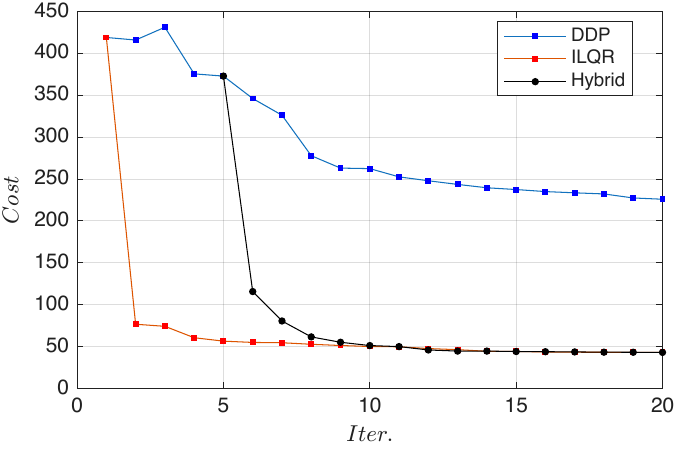}}
    \caption{A comparison of iLQR, DDP and hybrid solution. The hybrid solution depicts iLQR applied to the point where DDP solution reduces $\alpha$ (depicted in Figures \ref{fig:cool_pendulum}, \ref{fig:cool_cartpole}, \ref{fig:cool_TP1} and \ref{fig:cool_TP2}).}
    \label{fig:correction_lr}
\end{figure}

We applied iLQR at the point where DDP significantly reduces the line-search parameter. The resulting performance is shown in Fig.~\ref{fig:correction_lr}, where it is labeled as ``Hybrid.'' The plots clearly indicate that switching to iLQR at the stage where DDP cools down improves performance. Unlike DDP, iLQR maintains a stable learning rate and shows better convergence for the cart-pole, pendulum swing-up tasks and Test Problem-2. It did show similar convergence as compared to DDP for Test Problem-1. It should be noted that the  cost increased in both Table \ref{tab:cost_pred_DDP_pen} and Table \ref{tab:cost_pred_DDP} as we used an unregularized version of DDP to test this.

\begin{table}
    \centering
    \begin{tabular}{|c|c|c|c|}
    \hline
      Iter.   & $J$ & $\Delta J$ & $J_{\text{pred}} = J + \Delta J$\\
      \hline
        0 & $701.4661$ & $-377.7732$ & $323.6929$ \\
        \hline
        2 & $149.8840$ & $-51.4378$ & $98.4462$ \\
        \hline
        5 & $2.872408 \times 10^{5}$ & $-3.3685 \times 10^{5}$ & $-4.9609 \times 10^{4}$ \\
    \hline
    \end{tabular}
    \caption{Change in cost predicted by DDP over various iterations starting from random initial guesses for the pendulum swing-up task.}
    \label{tab:cost_pred_DDP_pen}
\end{table}

\begin{table}
    \centering
    \begin{tabular}{|c|c|c|c|}
    \hline
      Iter.   & $J$ & $\Delta J$ & $J_{\text{pred}} = J + \Delta J$\\
      \hline
        0 & $8.897408 \times 10^{5}$ & $-8.1926 \times 10^{5}$ & $7.0481 \times 10^{4}$ \\
        \hline
        3 & $6.011446 \times 10^{5}$ & $-9.3269e \times 10^{4}$ & $5.0788 \times 10^{5}$ \\
        \hline
        5 & $4.998068 \times 10^{5}$ & $-3.6467 \times 10^{7}$ & $-3.5968 \times 10^{7}$ \\
        \hline
    \end{tabular}
    \caption{Change in cost predicted by DDP over various iterations starting from random initial guesses for the cart-pole swing-up task.}
    \label{tab:cost_pred_DDP}
\end{table}

Another symptom of using DDP away from the optimal is an unrealistic cost decrease.
Table \ref{tab:cost_pred_DDP} shows cost reduction prediction for pendulum swing-up task. It indicates that at iteration 5, the cost predicted by DDP becomes negative ($-3.6\times 10^{7}$). Since the experiment used a purely quadratic cost with a lower bound of $0$ ($J_{\text{min}} = 0$), such a negative prediction is not feasible. This issue does not occur with iLQR. 

A similar problem is shown in Table \ref{tab:cost_pred_DDP_pen}, which shows the same problem for the pendulum swing-up task. DDP predicts a change in cost $\Delta J$ such that $J + \Delta J < J_{min}$. It should also be noted that the cost at iteration 5 is more than that of previous iterations, as we are considering unregularized DDP, and it tends to increase the cost because of $Q_{uu}$ being non-positive definite. 

\subsection{Faster convergence of DDP over iLQR near optima}
In certain scenarios, DDP demonstrates faster convergence than iLQR. In particular, near the optimal solution, a DDP step effectively shows a quadratic convergence without reducing the constant $c$ in eq.~\eqref{eq:con_map} to very low values. As a result, DDP often takes larger and more effective steps in proximity to the optimum, leading to quicker convergence in many cases.

\begin{figure}[!htbp]
    \centering
    \subfloat[Pendulum swing-up.\label{subfig:DDP_pen}]{
    \includegraphics[width=.45\linewidth]{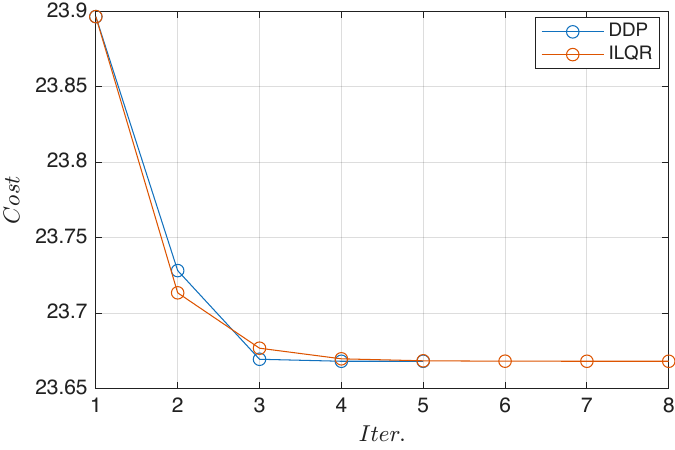}
    }
    \hfill
    \subfloat[Cart-pole swing-up\label{subfig:DDP_cart}]{
    \includegraphics[width=0.45\linewidth]{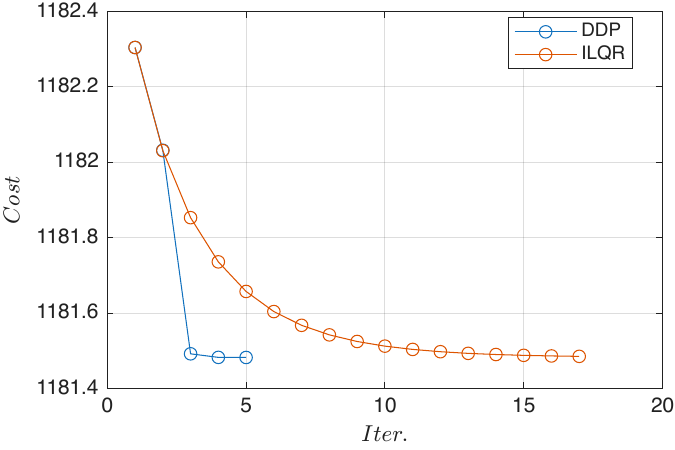}
    }
    \caption{Cost vs. number of iterations. With an initial control guess near the optimum, DDP reaches the optimum in fewer iterations.}
    \label{fig:DDP_fast}
\end{figure}

Figure \ref{fig:DDP_fast} illustrates that when initialized sufficiently close to the optimum, DDP tends to converge in fewer iterations by taking larger steps. In Fig.~\ref{subfig:DDP_pen}, for the pendulum swing-up task, DDP converges within 5 iterations, whereas iLQR requires 8. Similarly, in Fig.~\ref{subfig:DDP_cart}, for the cartpole swing-up, DDP converges in 5 iterations compared to 17 iterations for iLQR. Thus, in the neighborhood of an optimal solution, DDP typically achieves faster convergence than iLQR. DDP otperorming iLQR is also seen in Fig.~\ref{fig:cool_TP1} in iteration 2.
\begin{table}[h!]
\centering
\caption{Comparison of the number of iterations for iLQR, DDP, and Newton methods. The results are for Test Problem-3 with $n=100$, $m=50$, $T=100$, and $\mu = 1/200$.}
\begin{tabular}{|l|c|c|c|}
\hline
\textbf{} & \textbf{iLQR} & \textbf{DDP}\textsuperscript{b} & \textbf{Newton}\textsuperscript{b} \\ \hline
\textbf{Starting Point\textsuperscript{a}} & \textbf{No. of iter.} & \textbf{No. of iter.} & \textbf{No. of iter.} \\ \hline \hline
1 & 8 & 8  & 8 \\
2 & 11 & 9 & 10 \\
3 & 5 & 5 & 6 \\
4 & 9 & 9 & 9 \\
5 & 9 & 8 & 9 \\ \hline
\textbf{Average} & \textbf{8.4} & \textbf{7.8} & \textbf{8.4} \\ \hline
\multicolumn{4}{l}{\textsuperscript{a}\footnotesize{Initial control guesses are defined in Table~\ref{tab:starting_points_3}.}}\\
\multicolumn{4}{l}{\textsuperscript{b}\footnotesize{Data taken from \cite{LiaoShoemaker1992}}}\\
\end{tabular}

\label{tab:iterations}
\end{table}

\begin{table}[htbp]
    \centering
    \caption{Initial control trajectories for Test Problem 3 ($T=100, m=50, n=100$).}
    \label{tab:starting_points_3}
    \begin{tabular}{|c|l|}
        \hline
        \textbf{Starting Point} & \textbf{Initial Control}, $\vu_t$ \\
        \hline
        1 & $\vu_{t} = \boldsymbol{0}$\\
        \hline
        2 & $\vu_{t} = 0.01 \cdot \boldsymbol{1} $\\
        \hline
        3 & $\vu_t = -0.01 \cdot \boldsymbol{1} $ \\
        \hline
        4 &
        $\begin{cases}
          \vu_{t} = 0.01 \cdot \boldsymbol{1} & t \text{ is odd} \\
          \vu_{t} = -0.01 \cdot \boldsymbol{1}  & t \text{ is even}
        \end{cases}
        $ \\
        \hline
        5 &
        $\begin{cases}
          \vu_{t} = -0.01 \cdot \boldsymbol{1} & t \text{ is odd} \\
          \vu_{t} = 0.01 \cdot \boldsymbol{1}   & t \text{ is even}
        \end{cases}
        $ \\
        \hline
    \end{tabular}
\end{table}
Table~\ref{tab:iterations} also shows that DDP might slightly outperform iLQR for Test Problem-3 with slight nonlinearity ($\mu = 1/200$).

\section{Neighboring Extremal Paths}\label{sec:neighbouring_optima}
If we solve the DDP equations at the optimal solution $\vu^{*}_t$, we will get the control update to be $\vK_{t}\delta \vx_{t}$ as the term $\alpha_{t}$ will be zero at the optimal. This feedback obtained by solving DDP equations is the optimal first order feedback. In this section, we show that this is the same as solving for neighboring extremal paths\cite{BrysonHo69}. 

To begin with let us start with the neighboring extremal equations. At the optimal, the first order perturbations of the cost are zero i.e, $\delta  J = 0$. So, the optimization is over the second order terms. Formally,
\begin{align}
    &\min_{\delta \vu_t} \delta^{2} \mathcal{J} = \frac{1}{2}\delta \vx_{T}\tran C_{T,\vx\vx}\delta \vx_{T} + \sum_{t=0}^{T-1}\frac{1}{2}\begin{bmatrix}
        \delta \vx_{t}\\
        \delta \vu_{t}
    \end{bmatrix}\tran \begin{bmatrix}
        H_{\vx_t\vx_t} && H_{\vx_t\vu_t}\\
        H_{\vu_t\vx_t} && H_{\vu_t\vu_t} 
    \end{bmatrix}\begin{bmatrix}
        \delta \vx_{t}\\
        \delta \vu_{t}
    \end{bmatrix} \nonumber \\
    &\text{subject to}: \vf_{\vx_t}\delta \vx_t + \vf_{\vu_t}\delta \vu_t = \delta \vx_{t+1} \nonumber \\
    &\delta \vx_{0} = 0
\end{align}
where $H$ is the Hamiltonian of the system given by
\begin{equation*}
    H = c(\vx_t,\vu_t) + \blambda_{t+1}\tran(\vf(\vx_t,\vu_t))
\end{equation*}

The solution to the above LQR problem is:
\begin{align}
    &\vS_{t} = H_{\vx_t,\vx_t} + \vf_{\vx_t}\tran \vS_{t+1}\vf_{\vx_t} -  \nonumber \\
    &(H_{\vu_t\vx_t} + \vf_{\vu_t}\tran\vS_{t+1}\vf_{\vx_t})\tran (H_{\vu_t\vu_t} + \vf_{\vu_t}\tran \vS_{t+1}\vf_{\vu_t})^{-1} (H_{\vu_t\vx_t} + \vf_{\vu_t}\tran \vS_{t+1}\vf_{\vx_t})\label{eq:new_back_pass}\\
    &\mathbf{K}_{t} = (H_{\vu_t\vu_t} + \vf_{\vu_t}\tran \vS_{t+1}\vf_{\vu_t})^{-1}(\vf_{\vu_t}\tran \vS_{t+1}\vf_{\vx_t} + H_{\vu_t\vx_t})\\
    &\delta \vu_{t} = -\mathbf{K}_{t}\delta \vx_{t}
\end{align}
In the above expression, we have
\begin{align*}
    &H_{\vx_t\vx_t} = c_{\vx_t\vx_t} + \blambda_{t+1} \otimes \vf_{\vx_t\vx_t} \\
    &H_{\vu_t\vx_t} = H_{\vx_t\vu_t}\tran = c_{u_t,x_t} + \blambda_{t+1} \otimes \vf_{\vu_t\vx_t} \\
    &H_{\vu_t\vu_t} = c_{\vu_t\vu_t} + \blambda_{t+1} \otimes \vf_{\vu_t\vu_t}
\end{align*}
This gives the backward pass in eq.~\eqref{eq:new_back_pass} to be:
\begin{align*}
    &\vS_{t} = \underbrace{c_{\vx_t\vx_t} + \blambda_{t+1} \otimes \vf_{\vx_t\vx_t} + \vf_{\vx_t}\tran \vS_{t+1}\vf_{\vx_t}}_{Q_{\vx\vx}^{t}} 
    - \underbrace{(c_{\vx_t\vu_t} + \blambda_{t+1} \otimes \vf_{\vx_t\vu_t} + \vf_{\vx_t}\tran \vS_{t+1}\vf_{\vu_t})}_{Q_{\vx\vu}^{t}} \nonumber \\ &{\underbrace{(c_{\vu_t\vu_t} + \blambda_{t+1} \otimes \vf_{\vu_t\vu_t} + \vf_{\vu_t}\tran \vS_{t+1}\vf_{\vu_t})}_{Q_{\vu\vu}^{t}}}^{-1} \underbrace{(c_{\vu_t\vx_t} + \blambda_{t+1} \otimes \vf_{\vu_t\vx_t} + \vf_{\vu_t}\tran \vS_{t+1}\vf_{\vx_t})}_{Q_{\vu\vx}^{t}}. 
\end{align*}
This backward pass is the same as the backward pass given in the equation \eqref{eq:back_pass_DDP}. In addition to this, we have
\begin{align*}
    &\delta \vu_{t} = -\mathbf{K}_{t}\delta \vx_{t}\\
    &\mathbf{K}_{t} = {\underbrace{(c_{\vu_t\vu_t} + \blambda_{t+1} \otimes \vf_{\vu_t\vu_t} + \vf_{\vu_t}\tran \vS_{t+1}\vf_{\vu_t})}_{Q_{\vu\vu}^{t}}}^{-1}\underbrace{(c_{\vu_t\vx_t} + \blambda_{t+1} \otimes \vf_{\vu_t\vx_t} + \vf_{\vu_t}\tran\vS_{t+1}\vf_{\vx_t})}_{Q_{\vu\vx}^{t}}.
\end{align*}
This is the same expression as obtained for the feedback $\vK_{t}$ for DDP. This is also the ``optimal first-order feedback."

We show the effect of this feedback stabilization by using it to stabilize the system under uncertainty. One should note that iLQR feedback is the same as DDP except for the tensor products, $\blambda_{t+1} \otimes \vf_{\vu_t\vu_t}$ and $\blambda_{t+1} \otimes \vf_{\vu_t\vx_t}$, which are absent in iLQR.
\begin{figure}[!htbp]
    \centering
    \subfloat[Pendulum swing-up.\label{subfig:feedback_pen}]{\includegraphics[width=.48\linewidth]{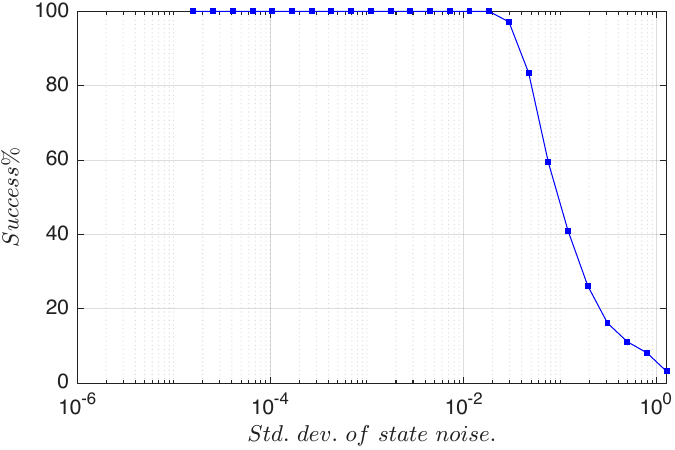}}
   \hfill
   \subfloat[Cart-pole swing-up.\label{subfig:feedback_cart}] {\includegraphics[width=0.48\linewidth]{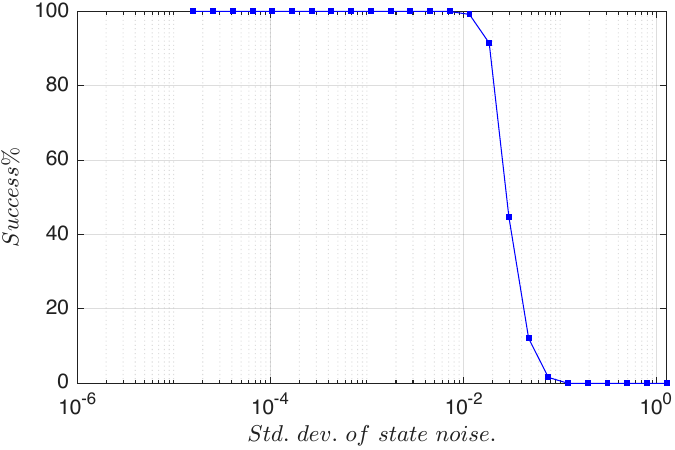}}%
    \caption{A plot showing stabilization under state noise with first-order optimal feedback for the cart-pole and pendulum swing up tasks.}
    \label{fig:feedback_comp}
\end{figure}

To show the efficacy of the feedback term, we introduce state noise in the state dynamics $\vx_{t+1} = \vf(\vx_{t},\vu_{t}) + \mathbf{w}_{t}$, where $\mathbf{w}_{t}$ was sampled from a gaussian distribution with zero mean and varying standard deviation. Fig. \ref{fig:feedback_comp} shows the feedback stabilization with varying noise. Success is defined when the feedback was able to take the system to a terminal step where $L_{2}$ the final state norm error was less than $0.1$. We observe how many trajectories satisfy this criteria over a total of 1000 simulations for each system. The x-axis in the figure indicates the standard deviation of the noise introduced. The first-order optimal feedback was able to stabilize the system perfectly upto a noise with standard deviation of 0.01 for the pendulum as well as the cart-pole swing up and then the performance started to decay.
\begin{figure}[!htbp]
    \centering
    \subfloat[$x$- the position of cart.\label{subfig:cart_pos_feed}]{\includegraphics[width=.48\linewidth]{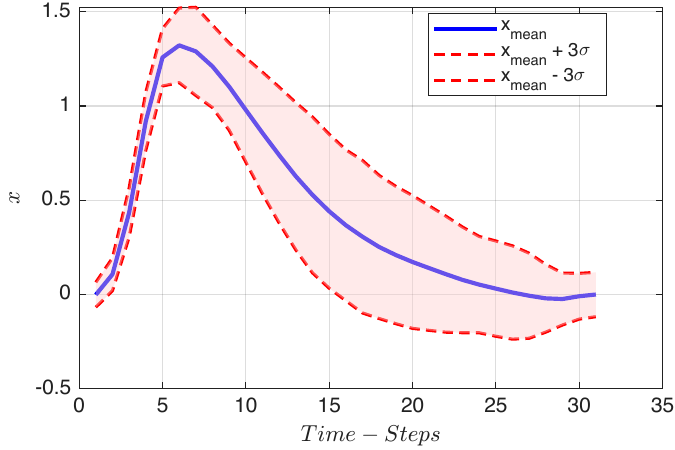}}
   \hfill
   \subfloat[$\dot{x}$- velocity of the cart.\label{subfig:cart_vel_feed}] {\includegraphics[width=0.48\linewidth]{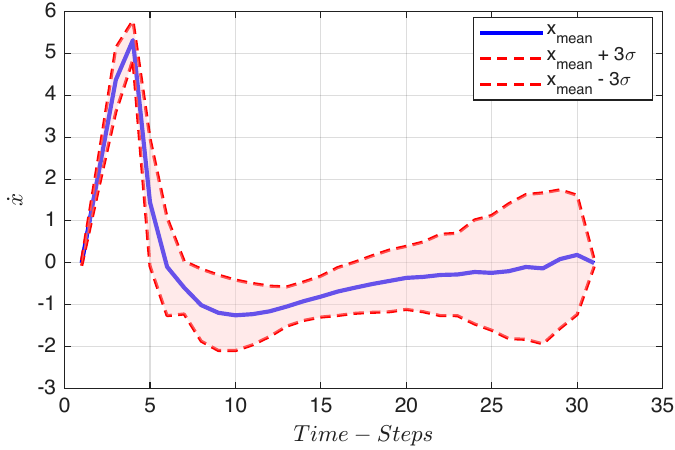}}%
   \hfill
   \subfloat[$\theta$ - angle of the pole.\label{subfig:pole_angle_feed}] {\includegraphics[width=0.48\linewidth]{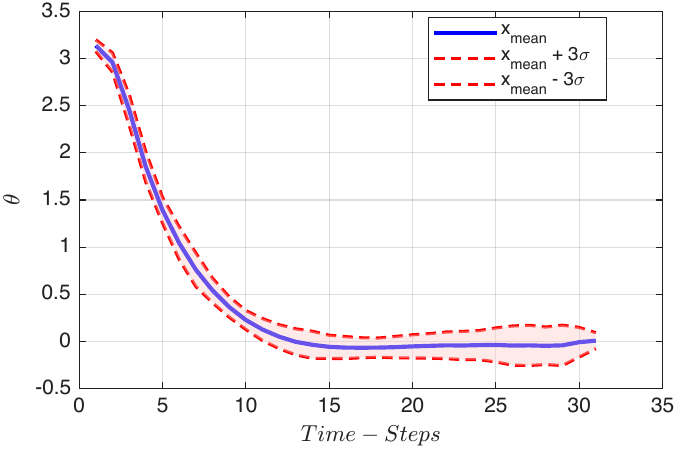}}
   \hfill
   \subfloat[$\dot{\theta}$ - angular velocity of the pole.\label{subfig:pole_vel_feed}] {\includegraphics[width=0.48\linewidth]{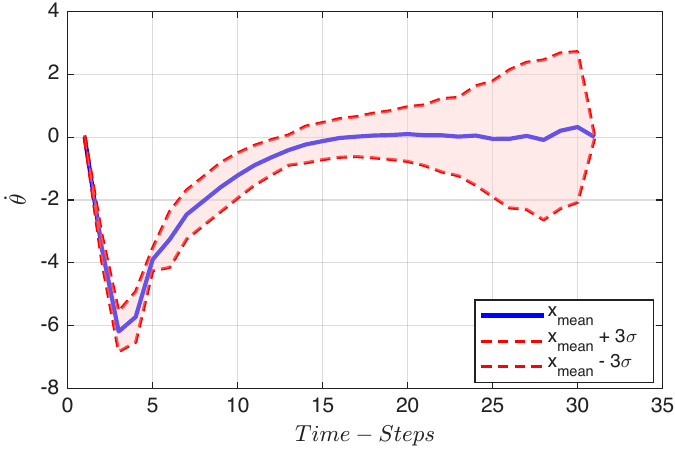}}
    \caption{Feedback stabilization of the cart-pole swing up with a gaussian noise (mean- $0$, standard deviation- $0.02$). The plot shows the mean and $3\sigma$ bound over $100$ trajectories.}
    \label{fig:cartpole_feedback_states}
\end{figure}
\begin{figure}[!htbp]
    \centering
    \subfloat[$\theta$ -angle of the pendulum.\label{subfig:theta_pen}]{\includegraphics[width=.48\linewidth]{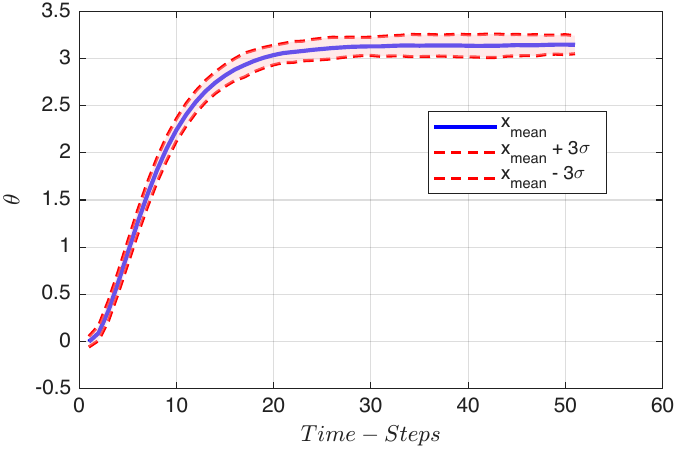}}
   \hfill
   \subfloat[$\dot{\theta}$ - angular velocity of the pendulum.\label{subfig:theta_dot_pen}] {\includegraphics[width=0.48\linewidth]{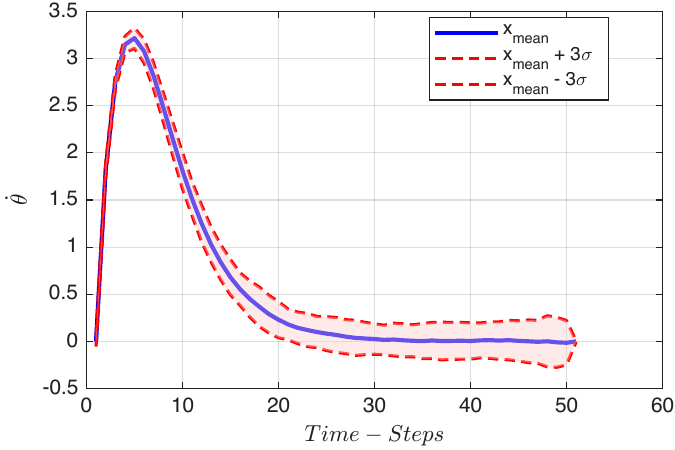}}%
   
    \caption{Feedback stabilization of the pendulum swing up with a gaussian noise (mean- $0$, standard deviation- $0.02$). The plot shows the mean and $3\sigma$ bound over $100$ trajectories.}
    \label{fig:pendulum_feedback_states}
\end{figure}

Figures \ref{fig:cartpole_feedback_states} and \ref{fig:pendulum_feedback_states} show state stabilization using the first-order optimal feedback. These experiments are run with a noise with mean $0$ and standard deviation $0.02$. 100 simulations were run to show the efficacy of the feedback stabilization in these systems.



\section{Conclusion}\label{sec:conclusion}

In this paper, we have studied the solution to discrete-time optimal control problems by analyzing Newton's method, Differential Dynamic Programming (DDP), and the iterative Linear Quadratic Regulator (iLQR) through a unified constrained Sequential Quadratic Programming (SQP) framework. The critical insight is that each of these methods solves a Quadratic Program (QP)—and consequently, an LQR problem—at every iteration.

Our analysis reveals that although DDP can be construed as an approximation to the Newton step near an optimum, both it and Newton's method can lead to inconsistent cost predictions and unreliable updates, especially when initialized far from the optimal solution. In contrast, iLQR provides consistent descent directions and admits theoretical convergence guarantees, making it far more robust in practice.

Experiments on multiple tasks confirm that iLQR delivers stable performance, whereas DDP may produce unreliable solutions unless heavily regularized. This regularization, however, removes any guarantee of convergence when far from an optimum. Indeed, iLQR demonstrated superior convergence over a regularized DDP implementation in one specific case ("Test Problem - 4"), while in other problems, the assumptions for iLQR to guarantee descent were satisfied, and it required no regularization.

These results establish iLQR as a principled SQP approach to optimal control rather than a mere approximation of DDP that neglects second-order terms. Our theoretical and empirical findings highlight the importance of consistency over second-order information; in fact, we show that omitting this second-order information is critical to performance, demonstrating an instance where less is more.

Additionally, we explored the neighboring optimal solution, showing it to be equivalent to expanding the perturbed Bellman equation around the optimum. This equivalence yields the DDP feedback solution near the optimum, which can be used to suppress noise and stabilize the system in the presence of uncertainty.


\bibliographystyle{plain}        
\bibliography{references}           



\end{document}